%
%
%


\ifx\justbeingincluded\undefined
\documentclass{svproc}
%
%
\usepackage[utf8]{inputenc}
\usepackage{latexsym}
\usepackage{amsmath}
\usepackage{amssymb}
\usepackage{graphicx}
\usepackage{hyperref}
\usepackage{url}

\usepackage{booktabs}
\usepackage{esvect}
\usepackage{cleveref}
\crefname{equation}{}{}

\newcommand{\vecg}[1]{\boldsymbol{#1}}    
\newcommand{\dd}{\hspace{0.1cm} \mathrm{d}}     
\newcommand{\tsum}{{\textstyle\sum\limits}}     
\newcommand{\norm}[1]{\| #1 \|}                 
\newcommand{\abs}[1]{| #1 |}                    

\newcommand{\RR}{\mathbb{R}}
\newcommand{\NN}{\mathbb N}
\newcommand{\parameterDomain}{\Xi}
\newcommand{\parameter}[1]{y_{#1}}
\newcommand{\parameterVector}{y}
\newcommand{\iPC}{q}
\newcommand{\nPC}{Q}

\newcommand{\nFE}{N}
\newcommand{\iKL}{m}
\newcommand{\nKL}{M}
\newcommand{\spatialDomain}{D}
\newcommand{\spatialDomainBoundary}{\partial D}
\newcommand{\stochasticDomain}{\Omega}
\newcommand{\stochasticDomainValue}{\omega}
\newcommand{\sigmaAlgebra}{\mathcal F}
\newcommand{\probabilityMeasure}{\mathbb P}
\newcommand{\pTuple}{(\stochasticDomain,\sigmaAlgebra,\probabilityMeasure)}
\newcommand{\probabilityDensity}{\rho}
\newcommand{\stochasticDegree}{k}

\begin{document}
\mainmatter              
\fi

\title{A Bramble-Pasciak Conjugate Gradient Method for Discrete Stokes Problems with Lognormal Random Viscosity}
\titlerunning{BPCG for Stokes with lognormal viscosity}  
%
\author{Christopher M\"uller\inst{1,2} \and Sebastian Ullmann\inst{1,2} \and
Jens Lang\inst{1,2}}
\authorrunning{Christopher M\"uller et al.} 
%
%
\index{M\"ueller, C.}
\index{Ullmann, S.}
\index{Lang, J.}
\institute{Graduate School of Computational Engineering, Technische Universit\"at Darmstadt, 
  Dolivostr. 15, 64293, Darmstadt, Germany\\
\email{cmueller@gsc.tu-darmstadt.de},\\ WWW home page:
\texttt{http://www.graduate-school-ce.de/index.php?id=680}
\and
Department of Mathematics, Technische Universit\"at Darmstadt, 
  Dolivostr. 15, 64293, Darmstadt, Germany}

\maketitle              

\begin{abstract}
  We study linear systems of equations arising from a stochastic Galerkin finite element discretization of saddle point problems with random data and its iterative solution. We consider the Stokes flow model with random viscosity described by the exponential of a correlated random process and shortly discuss the discretization framework and the representation of the emerging matrix equation. Due to the high dimensionality and the coupling of the associated symmetric, indefinite, linear system, we resort to iterative solvers and problem-specific preconditioners. As a standard iterative solver for this problem class, we consider the block diagonal preconditioned MINRES method and further introduce the Bramble-Pasciak conjugate gradient method as a promising alternative. This special conjugate gradient method is formulated in a non-standard inner product with a block triangular preconditioner. From a structural point of view, such a block triangular preconditioner enables a better approximation of the original problem than the block diagonal one. We derive eigenvalue estimates to assess the convergence behavior of the two solvers with respect to relevant physical and numerical parameters and verify our findings by the help of a numerical test case. We model Stokes flow in a cavity driven by a moving lid and describe the viscosity by the exponential of a truncated Karhunen-Lo\`eve expansion. Regarding iteration counts, the Bramble-Pasciak conjugate gradient method with block triangular preconditioner is superior to the MINRES method with block diagonal preconditioner in the considered example.
  \keywords{Uncertainty quantification, PDEs with random data, Stokes flow, Preconditioning, Stochastic Galerkin, 
         Lognormal data, Mixed finite elements, Conjugate gradient method, Saddle point problems}
\end{abstract}
\section{Introduction}
We study the stochastic Galerkin finite element (SGFE) method 
\cite{BabushkaEtAl2004,GhanemSpanos1991,GunzburgerEtAl2014} as a tool to approximate statistical quantities in the context 
of saddle point problems with random input data. Stochastic Galerkin (SG) methods rely on a representation of the 
random input based on a vector of random variables with known probability density. The starting point of the 
method is a weak formulation not only over the spatial domain but also over the image domain of this 
random vector. The SGFE approach enables the computation of the unknown solution coefficients via a 
Galerkin projection onto the finite dimensional tensor product space of a finite element (FE) 
space for the spatial dependencies and a global polynomial/SG space for the random vector. 

Concerning convergence rates, SG methods are often superior to more robust stochastic 
methods -- such as Monte-Carlo sampling -- when the input data exhibits certain regularity structures 
\cite{Bachmayr2017,HoangSchwab2014}. This is a property shared by the main competitors 
of the SG approaches: Stochastic collocation methods \cite{BabuskaEtAl2007,GunzburgerEtAl2014} 
similarly exploit the structure of the random input and further rely on uncoupled solutions of the 
underlying deterministic problem, just like sampling methods. An advantage of SG methods is that rigorous 
error analysis can be used to analyze them, but they are more challenging from a computational point of view: 
A block structured system of coupled deterministic problems must be solved. However, this can be done efficiently 
using iterative methods and problem-specific preconditioners, see e.g. 
\cite{ErnstEtAl2009,MuellerEtAl2017,PowellElman2009,PowellUllmann2010,EUllmannEtAl2012}.
 .

We build on these results and consider the Bramble-Pasciak conjugate gradient (BPCG) method 
\cite{BramblePasciak1988} in the SGFE setting with lognormal data. We compare it to the MINRES approach, the standard Krylov 
subspace solver for the problem class we consider, and investigate the performance of both solvers with 
respect to different problem and discretization parameters. This is done both analytically 
and based on a numerical test case.

As a saddle point problem, we consider the Stokes flow model in a bounded domain 
$\spatialDomain \subset \RR^2$ with boundary $\spatialDomainBoundary$. The vector 
$x = (x_1,x_2)^{\text{T}} \in \spatialDomain$ denotes the spatial coordinates. The Stokes equations are a simplification 
of the Navier-Stokes equations and describe the behavior of a velocity 
field $u = (u_1,u_2)^{\text{T}}$ and a pressure field $p$ subject to viscous and external forcing. We also 
introduce the probability space $\pTuple$, where $\stochasticDomain$ denotes the set of elementary events, 
$\sigmaAlgebra$ is a $\sigma$-algebra on $\stochasticDomain$ and 
$\probabilityMeasure:\sigmaAlgebra \rightarrow [0,1]$ is a probability measure. 

Input data are inherently uncertain due to either a lack of knowledge or simply imprecise measurements. 
Taking into account this variability in the model, we assume that the viscosity is a random field
$\nu = \nu({x},\stochasticDomainValue): \spatialDomain \times \stochasticDomain \to \RR$.
Since the input uncertainty propagates through the model, the solution components also have to be considered 
as random fields. In summary, the strong form of the Stokes equations with uncertain viscosity is the 
following: 

Find ${u}= {u}({x},\stochasticDomainValue)$ and $p=p({x},\stochasticDomainValue)$ 
such that, $\probabilityMeasure$-almost surely,
\begin{alignat}{2}
-\nabla\cdot \big(\nu({x},\stochasticDomainValue) \,  \nabla  {u}({x},\stochasticDomainValue)\big) + 
\nabla p({x},\stochasticDomainValue)  &= {f}({x}) &\quad& \mathrm{in} \hspace{0.2cm} \spatialDomain 
\times\stochasticDomain, \notag\\
\label{eq:continuous_stoch_problem}
\nabla \cdot {u}({x},\stochasticDomainValue)   &= 0 &\quad& \mathrm{in} \hspace{0.2cm} \spatialDomain
\times \stochasticDomain, \\
{u}({x},\stochasticDomainValue)  &= {g}({x}) &\quad& \mathrm{on} \hspace{0.2cm} 
\spatialDomainBoundary \times \stochasticDomain.\notag 
\end{alignat}
Both, the volume force $f = (f_1,f_2)^{\text{T}}$ and the boundary data $g = (g_1,g_2)^{\text{T}}$ are 
assumed to be 
deterministic functions. This is for the sake of simplicity of notation. Treating stochastic forcing 
and boundary data in the model would be straightforward under appropriate integrability assumptions on the data.

The remainder of the paper is organized as follows: As a model for the uncertain viscosity, we introduce the 
exponential of a Karhunen-Lo\`eve expansion (KLE) of a Gaussian random field in \cref{sec:input_model}. 
We ensure boundedness of this expansion by stating suitable assumptions on its components. The boundedness of the 
viscosity is necessary for the well-posedness of the variational formulation we introduce in \cref{sec:var_form}.
In \cref{sec:SGFE_discretization}, 
we establish a matrix representation of the Stokes problem with lognormal random data by restricting the 
weak equations to a finite dimensional subspace spanned by Taylor-Hood finite elements and Hermite chaos 
polynomials. Preconditioning strategies are discussed in \cref{sec:precon} where we consider block 
diagonal and block triangular preconditioning structures. As building blocks, we use a Kronecker product structure with 
established approaches from the FE and SG literature as input. In \cref{sec:ev_analysis}, we derive inclusion bounds for 
the eigenvalues of relevant sub-matrices and interpret them concerning the overall convergence behavior. By modifying our 
block triangular preconditioner in a specific way, we ensure the existence of a conjugate gradient (CG) method in a 
non-standard inner product. We discuss the application of this CG method as well as the application of the 
MINRES iterative solver in greater detail in \cref{sec:iterative_solvers}. A numerical test case is 
considered in \cref{sec:numerical_examples} where we illustrate the expected convergence behavior of the two considered 
solvers with respect to different problem parameters. The final section eventually summarizes and concludes our work.

\section{Input Modeling}
\label{sec:input_model}
We start our considerations with the random field $\mu({x},\stochasticDomainValue)$ and assume 
that it is Gaussian and second-order, meaning $\mu \in L^2(\stochasticDomain,L^2(\spatialDomain))$. In this 
setting it is possible to represent $\mu({x},\stochasticDomainValue)$ as a KLE of the form \cite[Theorem 5.28]{LordEtAl2014}
\begin{equation}
\label{eq:inf_KL}
\mu({x},\stochasticDomainValue) = \mu_0({x}) + \sigma_\mu \sum_{\iKL=1}^\infty \sqrt{\lambda_\iKL}
\mu_\iKL({x}) \parameter{\iKL}(\stochasticDomainValue).
\end{equation}
Here, $\mu_0({x})$ is the mean field of $\mu({x},\stochasticDomainValue)$, 
i.e.~$\mu_0({x})=\int_{\stochasticDomain}\mu({x},\stochasticDomainValue) \dd \probabilityMeasure(\stochasticDomainValue)$ 
is the expected value of the random field. The eigenpairs $(\lambda_\iKL,\mu_\iKL)_{\iKL=1}^\infty$ belong to 
the integral operator which corresponds to the covariance function of 
the correlated Gaussian random field. Further, $\parameter{\iKL},\iKL=1,\dots, \infty$, 
are uncorrelated Gaussian random variables with zero mean and unit variance which live in the unbounded 
set of sequences $\RR^\NN$. As we are in the Gaussian setting, the random variables originating from the 
KLE are also stochastically independent. 

The unbounded support of the Gaussian random variables leads to one major problem concerning the theoretical 
investigations of the problem: As there is always a nonzero probability that random variables take on 
negative values with arbitrarily large magnitudes, negative values of the modeled viscosity can occur 
independent of its construction. We use a standard approach to avoid negative values and apply the exponential 
function to \cref{eq:inf_KL}, yielding
\begin{equation}
\label{eq:inf_exp_KL}
\nu({x},\stochasticDomainValue) = \exp(\mu({x},\stochasticDomainValue)) = 
\exp \Big(\mu_0({x}) + \sigma_\mu \sum_{\iKL=1}^\infty \sqrt{\lambda_\iKL}
\mu_\iKL({x}) \parameter{\iKL}(\stochasticDomainValue) \Big),
\end{equation}
for $\parameterVector=(\parameter{\iKL}(\stochasticDomainValue))_{\iKL \in \NN} :  \stochasticDomain \to \RR^\NN$. 
Expression \cref{eq:inf_exp_KL} is called a lognormal random process as the logarithm of $\nu({x},\stochasticDomainValue)$ 
is the Gaussian process $\mu({x},\stochasticDomainValue)$. In order to ensure boundedness of the viscosity, some assumptions 
have to be made on the series components. Following \cite{HoangSchwab2014}, we 
assume that
\begin{enumerate}
 \item[$(i)$] the mean field of $\mu({x},\stochasticDomainValue)$ and the product of the Karhunen-Lo\`eve eigenpairs is bounded:
 \begin{equation}
    \label{eq:KL_assumption_i}
    \mu_0, \, \sqrt{\lambda_\iKL} \mu_\iKL \in L^\infty(\spatialDomain), \quad \forall \iKL \in \NN,
 \end{equation}
 \item[$(ii)$] the series of the product of the Karhunen-Lo\`eve eigenpairs converges absolutely:
 \begin{equation}
 \label{eq:KL_assumption_ii}
  \chi:=  \left(\chi_\iKL\right)_{\iKL\geq 1} = 
  \big(\norm{\sqrt{\lambda_\iKL}\mu_\iKL({x}) }_{L^\infty(\spatialDomain)}\big)_{\iKL\geq 1} \in \ell^1(\NN).
 \end{equation}
\end{enumerate}
Then, we define the set
\begin{equation*}
 \Xi^\nKL:= \RR^\nKL.
\end{equation*}
The set $\Xi^\nKL$ can also be defined for infinitely many parameters. However, we will not do this here as it introduces 
additional difficulties to the problem. For a full analysis, we refer to \cite{SchwabGittelson2011}.

We define the value $\overline{\mu}_0:= \norm{\mu_0({x})}_{L^\infty(\spatialDomain)}$ and the truncated viscosity:
\begin{equation}
\label{eq:inf_exp_KL_det}
\nu_\nKL({x},\parameterVector) = \exp \Big(\mu_0({x}) + \sigma_\mu \sum_{\iKL=1}^\nKL \sqrt{\lambda_\iKL}
\mu_\iKL({x}) \, y_{\iKL}\Big),
\end{equation}
for $\parameterVector \in \Xi^\nKL$. Given the assumptions \cref{eq:KL_assumption_i} and \cref{eq:KL_assumption_ii}, the viscosity \cref{eq:inf_exp_KL_det} satisfies 
\begin{align}
\label{eq:bound_visc}
 0 < \underline{\nu}(\parameterVector):= \underset{{x} \in \spatialDomain}{\text{ess inf}} \, \nu_\nKL({x},\parameterVector) 
 \leq \nu_\nKL({x},\parameterVector) \leq \underset{{x} \in \spatialDomain}{\text{ess sup}} \, \nu_\nKL({x},\parameterVector)=:
\overline{\nu}(\parameterVector),
\end{align}
with 
\begin{eqnarray*}
\overline{\nu}(\parameterVector)&\leq& \exp(\overline{\mu}_0) \exp\Big(\sum_{\iKL=1}^\nKL \chi_\iKL \abs{y_\iKL}
\Big),\\
\underline{\nu}(\parameterVector)&\geq& \exp(-\overline{\mu}_0) \exp\Big(-\sum_{\iKL=1}^\nKL \chi_\iKL \abs{y_\iKL}
\Big),
\end{eqnarray*}
for $\parameterVector \in \parameterDomain^\nKL$. A proof of \cref{eq:bound_visc} can be found in 
\cite[Lemma 2.2]{HoangSchwab2014}. 

As a consequence of \cref{eq:bound_visc}, the viscosity \cref{eq:inf_exp_KL} is bounded from above and 
has a positive lower bound for almost all $\stochasticDomainValue \in \stochasticDomain$. This is a basic 
property necessary for our problem to be well-defined. A reasonable truncation is possible with moderate $\nKL$ when 
the covariance operator of the underlying random field is sufficiently smooth and the correlation length is sufficiently large. 

The stochastic independence of the random variables allows us to express the corresponding Gaussian density as a product of univariate densities:
\begin{equation}
\label{eq:joint_density}
\probabilityDensity(\parameterVector):=\prod_{\iKL=1}^{\nKL} \rho_\iKL(y_\iKL), \quad \text{with} 
 \quad \rho_\iKL(y_\iKL)
:= \frac{1}{\sqrt{2 \pi}} \exp(-y_\iKL^2/2).
\end{equation}
It makes sense to change from the abstract the domain $\parameterDomain^\nKL$  in the following, see 
\cite[section 2.1]{HoangSchwab2014}. The change of the space leads to an integral transform, such that 
the computation of the $p$th moment of a random variable $v(\stochasticDomainValue)$ is done via
\begin{equation}
\label{eq:int_transform}
\int_\stochasticDomain v^p(\stochasticDomainValue) \dd \probabilityMeasure(\stochasticDomainValue)
\approx \int_{\Xi^\nKL} v^p(\parameterVector) \, \probabilityDensity(\parameterVector) \dd \parameterVector =: \langle v^p \rangle.
\end{equation}
As a consequence of the Doob-Dynkin lemma \cite{Oksendal1998}, the output random fields can be 
pa\-ra\-me\-tri\-zed with the vector $\parameterVector$ as well. 

\section{Variational Formulation}
\label{sec:var_form}
In order to formulate the weak equations derived from \cref{eq:continuous_stoch_problem}, we introduce 
Bochner spaces $L_\probabilityDensity^2(\Xi^\nKL;X)$, where $X$ is a separable Hilbert space. 
They consist of all equivalence classes of strongly measurable functions $v \, : \, \Xi^\nKL \to X$ with norm 
\begin{equation*}
\norm{v}_{L_\probabilityDensity^2(\Xi^\nKL;X)} = 
\left(\int_{\Xi^\nKL} \norm{v(\cdot,\parameterVector)}_X^2  \, \probabilityDensity(\parameterVector) \dd \parameterVector 
\right)^{1/2} < \infty.
\end{equation*} 
In the following, we work in the tensor product spaces $L_\probabilityDensity^2(\Xi^\nKL)\otimes X$ with corresponding 
norms $\norm{\cdot}_{L_\probabilityDensity^2(\Xi^\nKL)\otimes X} := \norm{\cdot}_{L_\probabilityDensity^2(\Xi^\nKL;X)}$ 
as they are isomorphic to the Bochner spaces for separable $X$.

For the Stokes problem with random data, we insert the standard function spaces for enclosed flow and 
introduce 
\begin{equation}
\label{eq:variational_spaces}
\begin{aligned}
\vecg{\mathcal{V}}_0 := L_{\probabilityDensity}^2(\parameterDomain^\nKL) \otimes \vecg{V}_0(\spatialDomain),  \\
\mathcal{W}_0        := L_{\probabilityDensity}^2(\parameterDomain^\nKL) \otimes W_0(\spatialDomain),
\end{aligned}
\end{equation}
with
\begin{eqnarray*}
\vecg{V}_0&:=&\vecg{H}^1_0(\spatialDomain)=\Big\{{v}\in \vecg{H}^1(\spatialDomain) \, | \, 
{v}_|{_{\spatialDomainBoundary}} = {0} \Big \}, \\
W_0 &:=&L^2_0(\spatialDomain)=\Big\{q \in L^2(\spatialDomain) \, | \, \smallint_\spatialDomain q({x}) \dd {x} = 0 \Big\}.
\end{eqnarray*}
The product spaces are Hilbert spaces as well.

We are now able to formulate the variational formulation associated with \cref{eq:continuous_stoch_problem}: 
Find $({u},p) \in \vecg{\mathcal{V}}_0 \times \mathcal{W}_0$ satisfying
\begin{equation}
\label{eq:weak_formulation}
\begin{aligned}
\langle a({u},{v}) \rangle + \langle b({v},p)\rangle &= \langle l({v})\rangle, &\quad& 
\forall {v} \in \vecg{\mathcal{V}}_0, \\
\langle b({u},q)\rangle &= \langle t(q) \rangle, &\quad& \forall q \in \mathcal{W}_0,
\end{aligned}
\end{equation}
with bilinear forms 
\begin{alignat}{3}
\label{eq:bilinear_form_a}
\langle a({u},{v})\rangle &:= \int_{\parameterDomain^\nKL}\int_{\spatialDomain}\nu_\nKL({x},\parameterVector) \, \nabla 
{u}({x},\parameterVector) \cdot \nabla {v}({x},\parameterVector) \, \probabilityDensity(\parameterVector) \dd {x} 
\dd \parameterVector, \\
\label{eq:bilinear_form_b}
\langle b({v},q) \rangle &:= - \int_{\parameterDomain^\nKL} \int_{\spatialDomain} q({x},\parameterVector) \, \nabla 
\cdot {v}({x},\parameterVector) \, \probabilityDensity(\parameterVector) \dd {x} \dd \parameterVector,
\end{alignat}
for ${u},{v} \in  \vecg{\mathcal{V}}_0$, $q \in \mathcal{W}_0$, and linear functionals 
\begin{alignat*}{3}
\langle l({v}) \rangle &:= \int_{\parameterDomain^\nKL}\int_{\spatialDomain} {f}({x}) \cdot {v}
({x},\parameterVector) \,\probabilityDensity(\parameterVector) \dd {x} \dd \parameterVector -\langle a({u}_0,{v}) \rangle , &\quad& \forall {v} \in 
\vecg{\mathcal{V}}_0,\\ 
\langle t({v}) \rangle &:= -\langle b({u}_0,q)\rangle , &\quad& \forall q \in \mathcal{W}_0,
\end{alignat*}
where ${u}_0 \in \vecg{\mathcal{V}}_0$ is the lifting of the boundary data ${g}$ in the sense of 
the trace theorem.

The weak equations \cref{eq:weak_formulation} are a set of parametric deterministic equations which contain 
the full stochastic information of the original problem with random data. For the well-posedness of the 
variational formulation \cref{eq:weak_formulation}, we refer to \cite{SchwabGittelson2011}.

\section{Stochastic Galerkin Finite Element Discretization}
\label{sec:SGFE_discretization}
We derive a discrete set of equations from \cref{eq:weak_formulation} by choosing appropriate 
subspaces for the building blocks of the product spaces \cref{eq:variational_spaces}. For the discretization 
of the physical space, we use FE subspaces $\vecg{V}_0^h \subset \vecg{V}_0$ and 
$W_0^h \subset W_0$, where $h$ denotes the mesh size. The domain of the random variables is 
discretized with generalized polynomial chaos \cite{XiuKarniadakis2002b}. The corresponding SG space is 
denoted by $S^{\stochasticDegree} \subset L_{\probabilityDensity}^2(\parameterDomain^\nKL)$, where $k$ is the 
degree of the chaos functions and $\nKL$ is the truncation index of the KLE in 
\cref{eq:inf_exp_KL_det}. The SGFE subspaces 
$\vecg{\mathcal{V}}_0^{\stochasticDegree h} \subset \vecg{\mathcal{V}}_0$ and 
$\mathcal{W}_0^{\stochasticDegree h} \subset \mathcal{W}_0$ are now defined as products of the separate 
parts:
\begin{equation}
\label{eq:variational_subspaces_parameter}
\begin{aligned}
\vecg{\mathcal{V}}_0^{\stochasticDegree h} &:= S^{\stochasticDegree} \otimes \vecg{V}_0^h,  \\
\mathcal{W}_0^{\stochasticDegree h}        &:= S^{\stochasticDegree} \otimes W_0^h.
\end{aligned}
\end{equation}
As a specific choice for the spatial discretization, we use inf-sup stable Taylor-Hood $P_2/P_1$ 
finite elements on a regular triangulation. They consist of $\nFE_u$ continuous piecewise quadratic basis 
functions for the velocity space and $\nFE_p$ continuous piecewise linear basis functions for the pressure 
space. For the parametric space, we choose a discretization based on a complete multivariate Hermite polynomial 
chaos. The corresponding basis functions are global polynomials which are orthonormal with respect to the joint 
density 
$\probabilityDensity(\parameterVector)$ in \cref{eq:joint_density}. Therefore, they are the appropriate match to the 
Gaussian distribution of the input parameters according to the Wiener-Askey scheme \cite{XiuKarniadakis2002b}. 
We construct the $\nKL$-variate basis functions as a product of $\nKL$ univariate chaos polynomials. We work 
with a complete polynomial basis, i.e.~we choose a total degree $\stochasticDegree$ and the sum of the 
degrees of the $\nKL$ univariate chaos polynomials $\sum_{\iKL=1}^{\nKL} k_\iKL$ must be less then or equal 
$\stochasticDegree$. This yields a multivariate chaos or stochastic Galerkin basis of size $\nPC_z$, where 
\begin{equation}
\label{eq:dim_sg_space}
\nPC_z:=\begin{pmatrix} \nKL + \stochasticDegree \\ \stochasticDegree \end{pmatrix}.
\end{equation}
Behind every index $\iPC=0,\dots,\nPC_z-1$ there is a unique combination of univariate polynomial degrees 
 -- a multi-index -- $(\stochasticDegree_1,\dots,\stochasticDegree_\nKL)$ and vice versa. The 
$q$th multivariate polynomial chaos basis function is thus the product of 
$\nKL$ univariate chaos polynomials with degrees from the $q$th multi-index.

Regarding the representation \cref{eq:inf_exp_KL_det}, a separation of the spatial and parametric 
dependencies would be beneficial. Such a decomposition can be achieved by representing 
the exponential of the truncated KLE in a Hermite chaos basis~\cite{PowellUllmann2010}:
\begin{equation}
\label{eq:trunc_KL_gpc}
\nu_\nKL (x,\parameterVector) = \sum_{\iPC=0}^{\nPC_\nu-1} \nu_\iPC(x) \psi_\iPC(\parameterVector),
\end{equation}
where the $\{\psi_\iPC(\parameterVector)\}_{\iPC=0}^{\nPC_\nu-1}$ are the Hermite chaos basis functions and 
\begin{equation}
\label{eq:KL_fluct}
\nu_\iPC(x) = \exp\Big(\mu_0(x) + \tfrac12 \sigma_\mu^2 \, \tsum_{\iKL=1}^\nKL \lambda_\iKL \mu^2_\iKL(x)\Big) \prod_{\iKL=1}^\nKL
\frac{\left(\sigma_\mu \sqrt{\lambda_\iKL}\mu_\iKL(x)\right)^{\stochasticDegree_\iKL}}{\sqrt{\stochasticDegree_\iKL !}}.
\end{equation}
Again, there is a unique multi-index $(\stochasticDegree_1,\dots,\stochasticDegree_\nKL)$ to every index 
$\iPC=1,\dots,\nPC_\nu$.

When used in a stochastic Galerkin setting, the representation \cref{eq:trunc_KL_gpc} is in fact exact if we use the same Hermite polynomial chaos basis 
as for the representation of the solution fields but with twice the total degree \cite[Remark 2.3.4]{EUllmann2008}, i.e.
\begin{equation}
\nPC_\nu:=\begin{pmatrix} \nKL + 2 \stochasticDegree \\ 2 \stochasticDegree \end{pmatrix}.
\end{equation}
Although $\nPC_\nu$ grows fast with $\stochasticDegree$ and $\nKL$ and can be a lot bigger than $\nPC_z$, 
it does not make sense to truncate the sum in \cref{eq:trunc_KL_gpc} prematurely. Doing so may destroy the 
coercivity of \cref{eq:bilinear_form_a} and the corresponding discrete 
operator can easily become indefinite, see \cite[Example 2.3.6]{EUllmann2008}. Consequently, we always use 
all terms in \eqref{eq:trunc_KL_gpc}. 

Without going into the details, we will assume in the following that the 
fully discrete problem is well-posed which implies that the discretizations we choose are inf-sup 
stable on the discrete product spaces \cref{eq:variational_subspaces_parameter}. An analysis of discrete 
inf-sup stability for a mixed formulation of the diffusion problem with uniform random data can be found in 
\cite[Lemma 3.1]{BespalovEtAl2012}. 

The size of the emerging system of equations for our chosen discretizations~is
\begin{equation}
\text{dim}(\vecg{\mathcal{V}}_0^{\stochasticDegree h} \times \mathcal{W}_0^{\stochasticDegree h}) =  
\nPC_z \, (\nFE_u + \nFE_p) = \nPC_z \, \nFE.
\end{equation}
To derive a matrix equation of the Stokes problem with random data, the velocity and pressure random 
fields as well as the test functions are represented in the FE and SG bases and 
subsequently inserted into the weak formulation \cref{eq:weak_formulation} together with the input 
representation \cref{eq:trunc_KL_gpc}, yielding
\begin{equation}
\label{eq:system_of_equations}
\mathcal{C} \, \vecg{w} = \vecg{b}, \quad \mathcal{C} \in \RR^{\nPC_z \nFE \times \nPC_z \nFE},
\end{equation}
where
\begin{equation}
\label{eq:block_matrix_form}
\mathcal{C}:= \begin{bmatrix} \mathcal{A} \hspace{0.2cm} & \mathcal{B}^{\text{T}} \\ \mathcal{B} \hspace{0.2cm} & 0 \end{bmatrix}, \quad 
\vecg{w}:= \begin{bmatrix} \boldsymbol{u} \\ \boldsymbol{p} \end{bmatrix}, \quad 
\vecg{b}:= \begin{bmatrix} \vecg{f} \\ \vecg{t} \end{bmatrix}.
\end{equation}
Here, the vectors $\boldsymbol{u} \in \RR^{\nPC_z \nFE_u}$ and $\boldsymbol{p} \in \RR^{\nPC_z \nFE_p}$ contain 
the coefficients of the discrete velocity and pressure solutions, respectively. Furthermore,
\begin{alignat}{2}
\label{eq:SGFEM_diffusion_matrix}
\mathcal{A} &= I \otimes A_0 + \sum_{\iPC=1}^{\nPC_\nu -1} G_{\iPC} \otimes A_{\iPC} \quad&& \in \RR^{\nPC_z  \nFE_u \times \nPC_z \nFE_u}, \\
\label{eq:SGFEM_gradient_matrix}
\mathcal{B} &= I \otimes B \quad&& \in \RR^{\nPC_z \nFE_p \times \nPC_z \nFE_u}, \\
\boldsymbol{f} &= \boldsymbol{g}_0 \otimes \boldsymbol{w} \quad&& \in \RR^{\nPC_z \nFE_u}, \\
\label{eq:div_rhs}
\boldsymbol{t} &= \boldsymbol{g}_0 \otimes \boldsymbol{d} \quad&& \in \RR^{\nPC_z \nFE_p},
\end{alignat}
where $I \in \RR^{\nPC_z \times \nPC_z}$ is the Gramian of $S^{\stochasticDegree}$, because the basis is 
orthonormal. Further, the $G_\iPC \in \RR^{\nPC_z \times \nPC_z}$ emerge from the evaluation of the  product 
of three Hermite chaos basis functions in the expectation \eqref{eq:int_transform} with $p=1$. They are called stochastic Galerkin matrices in the 
following. We call the $A_\iPC \in \RR^{\nFE_u \times \nFE_u}$ weighted FE velocity 
Laplacians as they are FE velocity Laplacians weighted with the functions 
$\nu_\iPC(x)$, $\iPC=0\,\dots,\nPC_\nu -1$. The matrices and vectors in 
\cref{eq:SGFEM_diffusion_matrix}--\cref{eq:div_rhs} can all be constructed when the FE and 
SG basis representations are inserted into the variational formulation 
\cref{eq:weak_formulation} together with the input representation \cref{eq:trunc_KL_gpc}.
To avoid confusion, the matrices on the product spaces are calligraphic capital letters whereas the matrices 
on either the FE or the SG spaces are standard capital letters.

The size of the SGFE system is the product of the size of the FE basis~$\nFE$ and 
the size of the SG basis $\nPC_z$. Additionally, the problem is coupled, as the symmetric 
matrices $G_\iPC$, $\iPC=1\,\dots,\nPC_\nu -1$, 
are not diagonal. Therefore, realistic problems are often too big to be treated with 
direct solution methods, which is why iterative algorithms are used instead. The application of iterative 
methods naturally raises the question of efficient preconditioning due to the inherent ill-conditioning of 
the problem. For the mentioned reasons, preconditioning and iterative methods are investigated in 
the following.

\section{Preconditioning}
\label{sec:precon}
The SGFE matrix $\mathcal{C} \in \RR^{\nPC_z\nFE\times\nPC_z\nFE}$ in \cref{eq:block_matrix_form} is a symmetric 
saddle point matrix. Therefore, the following factorizations exist:
\begin{equation}
\label{eq:block_factorizations}
\begin{bmatrix} \mathcal{A} \hspace{0.2cm} & \mathcal{B}^{\text{T}} \\ \mathcal{B}\hspace{0.2cm} & 0 \end{bmatrix} =
\begin{bmatrix} I & 0 \\ \mathcal{B A}^{-1} & I \end{bmatrix}
\begin{bmatrix} \mathcal{A}\hspace{0.2cm} & 0 \\ 0 \hspace{0.2cm}& \mathcal{S} \end{bmatrix}
\begin{bmatrix} I & \mathcal{A}^{-1} \mathcal{B}^{\text{T}} \\ 0 & I \end{bmatrix}
= \begin{bmatrix} \mathcal{A}\hspace{0.2cm} & 0 \\ \mathcal{B}\hspace{0.2cm}  & \mathcal{S} \end{bmatrix}
\begin{bmatrix} I & \mathcal{A}^{-1} \mathcal{B}^{\text{T}} \\ 0 & I \end{bmatrix},
\end{equation}
where $\mathcal{S}:= - \mathcal{B}\mathcal{A}^{-1}\mathcal{B}^\text{T}$ is the SGFE Schur complement.
In section \ref{sec:SGFE_discretization}, we have assumed well-posedness of the discrete variational 
problem. This implies that the discrete version of the bilinear form \cref{eq:bilinear_form_a} is continuous 
and coercive. Inserting the FE and SG basis, the coercivity condition translates into positive definiteness of the 
matrix $\mathcal{A}$. The congruence transform in \cref{eq:block_factorizations} then implies that the 
discrete SGFE problem is indefinite as the Schur complement is negative semi-definite by construction.

Motivated by the factorizations \cref{eq:block_factorizations}, we consider block diagonal and block 
triangular preconditioning structures in the following. In the context of solving saddle point problems, 
these are well established concepts \cite{BenziEtAl2005}, which are generically given by
\begin{equation}
\label{eq:precon_structure}
\mathcal{P}_1 = \begin{bmatrix} \widetilde{\mathcal{A}}\hspace{0.2cm} & 0 \\ 0\hspace{0.2cm} & 
\widetilde{\mathcal{S}}\end{bmatrix}, 
\quad
\mathcal{P}_2 = \begin{bmatrix} \widetilde{\mathcal{A}}\hspace{0.2cm} & 0 \\ 
\mathcal{B}\hspace{0.2cm} & \widetilde{\mathcal{S}}\end{bmatrix}.
\end{equation}
Here, $\widetilde{\mathcal{A}}$ and $\widetilde{\mathcal{S}}$ are approximations of $\mathcal{A}$ 
and $\mathcal{S}$, respectively. Choosing these 
approximations appropriately is the main task of preconditioning. Desirable properties 
include a reduction of computational complexity and an improvement of the condition of the involved 
operators. 

In the following, we do not directly look for suitable $\widetilde{\mathcal{A}}$ and 
$\widetilde{\mathcal{S}}$, but make another structural assumption prior to that. We want each SGFE 
preconditioner to be the Kronecker product of one FE and one SG preconditioner, 
i.e.
\begin{equation}
\label{eq:precon_ansatz_diff}
\widetilde{\mathcal{A}} := \widetilde{G}_A \otimes \widetilde{A}, \quad 
\widetilde{\mathcal{S}} := \widetilde{G}_S \otimes \widetilde{S},
\end{equation}
where $\widetilde{A} \in \RR^{\nFE_u \times \nFE_u}$ and $\widetilde{S}\in \RR^{\nFE_p \times \nFE_p}$ are 
approximations of the FE operators and the matrices $\widetilde{G}_A$, $\widetilde{G}_S \in \RR^{\nPC_z \times \nPC_z}$ are 
approximations of the SG operators. The structural simplifications \cref{eq:precon_ansatz_diff} have two advantages: 
Firstly, the Kronecker product is trivially invertible. Secondly, if we look into $\widetilde{\mathcal{A}}^{-1}\mathcal{A}$, we 
get
\begin{eqnarray}
\widetilde{\mathcal{A}}^{-1} \mathcal{A} = \bigg(\widetilde{G}_A^{-1} + \widetilde{A}^{-1}A_0 + 
\sum_{\iPC=1}^{\nPC_\nu -1} \widetilde{G}_A^{-1}  G_{\iPC} \otimes \widetilde{A}^{-1}A_{\iPC}\bigg).
\end{eqnarray}
The SG preconditioner $\widetilde{G}_A$ only acts on the SG matrices $I$ and $G_{\iPC}$ and 
the FE preconditioner $\widetilde{A}$ acts on the weighted FE Laplacians $A_{\iPC}$, $\iPC=0,\dots,\nPC_\nu-1$.
This works in the same way for the preconditioned SGFE Schur complement, as we will see in Lemma 
\ref{lemma:eigenvalues_precon_Schur} on page \pageref{lemma:eigenvalues_precon_Schur}. The separation into FE and 
SG parts thus allows the use of established preconditioners from the FE and SG literature as building blocks for 
the SGFE preconditioners. Now, we will choose suitable approximations $\widetilde{A}$ and 
$\widetilde{S}$ to the FE Laplacian and Schur complement, respectively, and suitable approximations $\widetilde{G}_A$ 
and $\widetilde{G}_S$ to the SG matrices.

\subsection{Finite Element Matrices}
\label{subsec:precon_fe_matrices}
First of all, we consider a preconditioner for the FE Laplacian and derive bounds for the eigenvalues of the 
preconditioned weighted FE Laplacians. Then, we decide on a preconditioner for the FE Schur complement.

To precondition FE Laplacians, the multigrid method has emerged as one of the most suitable 
approaches. The first reason for that is the following: one multigrid V-cycle with appropriate smoothing -- 
denoted by $\widetilde{A}_{\text{mg}}$ in the following~-- is spectrally equivalent to the FE Laplacian $A$, see 
\cite[section 2.5]{ElmanEtAl2014}. In this context, spectral equivalence means that there exist positive constants 
$\delta$ and $\Delta$, independent of $h$, such that
\begin{equation}
\label{eq:upper_bound_Diff_approxDiff}
\delta \leq \frac{\vecg{v}^{\text{T}}A\, \vecg{v}}{\vecg{v}^{\text{T}}\widetilde{A}_{\text{mg}}\vecg{v}} \leq \Delta, \quad
\forall \vecg{v} \in \RR^{\nFE_u}\backslash \{\vecg{0}\}.
\end{equation}
As the eigenvalues of $A$ actually depend on the mesh width $h$ \cite[Theorem~3.21]{ElmanEtAl2014}, 
preconditioning with $\widetilde{A}_{\text{mg}}$ thus eliminates this $h$-ill-conditioning of $A$. The multigrid method is also attractive from a 
computational point of view as it can be applied with linear complexity. Concerning \cref{eq:upper_bound_Diff_approxDiff}, 
when the multigrid operator is applied to the weighted FE Laplacians in \cref{eq:SGFEM_diffusion_matrix}, we derive 
\begin{equation}
\label{eq:bound_precon_weighed_Laplacian}
-\overline{\nu}_\iPC \, \Delta \leq \frac{\vecg{v}^{\text{T}}A_\iPC \, \vecg{v}}{\vecg{v}^{\text{T}}
\widetilde{A}_{\text{mg}}\vecg{v}} \leq \overline{\nu}_\iPC \, \Delta, \quad
\forall \vecg{v} \in \RR^{\nFE_u}\backslash \{\vecg{0}\},
\end{equation}
where $\overline{\nu}_\iPC = \norm{\nu_{\iPC}({x})}_{L^\infty(\spatialDomain)}$, for $\iPC=1,\dots,\nPC_\nu-1$. 
For $\iPC=0$, the lower bound is different, namely $\underline{\nu}_0:= \inf_{{x} \in \spatialDomain} \nu_0({x})>0$. 
This bound is tighter because we know that the function $\nu_0= \exp(\mu_0(x) + \tfrac12 \sigma_\mu^2 \, \tsum_{\iKL=1}^\nKL \lambda_\iKL \mu^2_\iKL(x))$ from \cref{eq:KL_fluct} is 
always positive. The functions are in $L^\infty(\spatialDomain)$ due to the assumptions \cref{eq:KL_assumption_i} and the 
continuity of the exponential function.

The pressure mass matrix $M_p$ is a good preconditioner for the negative FE Schur complement $-S=BA^{-1}B^{\text{T}}$, because the 
matrices are spectrally equivalent in the sense that \cite[Theorem 3.22]{ElmanEtAl2014}
\begin{equation}
\label{eq:lower_bound_Schur_Mass}
\gamma^2 \leq \frac{\vecg{q}^{\text{T}}BA^{-1}B^{\text{T}}\vecg{q}}{\vecg{q}^{\text{T}}M_p \, \vecg{q}}
\leq 1, \quad
\forall \vecg{q} \in \RR^{\nFE_p}\backslash \{\vecg{0},\vecg{1}\},
\end{equation}
where $B$ is the FE divergence matrix and $\gamma>0$ is the inf-sup constant of our mixed FE 
approximation. Further, the notation $\backslash \{\vecg{0},\vecg{1}\}$ means we exclude all multiples of the constant
function, see \cite[Section 3.3]{ElmanEtAl2014}. If necessary, we always 
exclude the hydrostatic pressure in this way in the following. As $M_p$ has the usual FE 
sparsity, using it as a preconditioner is too expensive in practice. Therefore, another approximation is considered, 
namely its diagonal $D_p := \text{diag}(M_p)$. It is spectrally equivalent 
to $M_p$, i.e.~there exist $\theta,\Theta >0$ such that
\begin{equation}
\label{eq:bound_Mass_diagMass}
\theta \leq \frac{\vecg{q}^{\text{T}}M_p\vecg{q}}{\vecg{q}^{\text{T}}D_p\vecg{q}} \leq \Theta, \quad
\forall \vecg{q} \in \RR^{\nFE_p}\backslash \{\vecg{0}\}.
\end{equation}
The constants $\theta$ and $\Theta$ only depend on the degree and type of finite elements used 
\cite{Wathen1991}. We use piecewise linear basis functions on triangles in our work, yielding 
$\theta = \frac12$ and $\Theta=2$. 
Consequently, $D_p$ is spectrally equivalent to the negative FE Schur 
complement. This directly follows from \cref{eq:lower_bound_Schur_Mass,eq:bound_Mass_diagMass}:
\begin{equation}
\label{eq:bound_Schur_diagMass}
\theta \, \gamma^2 \leq \frac{\vecg{q}^{\text{T}}BA^{-1}B^{\text{T}}\vecg{q}}{\vecg{q}^{\text{T}}
D_p\vecg{q}} 
\leq \Theta, \quad\forall \vecg{q} \in \RR^{\nFE_p}\backslash \{\vecg{0},\vecg{1}\}.
\end{equation}
Using $D_p$ as a preconditioner is also attractive from a complexity point of view as it can be 
applied with linear costs. The bounds \cref{eq:upper_bound_Diff_approxDiff}, \cref{eq:bound_precon_weighed_Laplacian} and 
\cref{eq:bound_Schur_diagMass}, which are independent of the mesh size, are the basis for analyzing the 
$h$-independence of the preconditioned SGFE saddle point problem later on in \cref{sec:ev_analysis}.

Due to the mentioned reasons, we use the multigrid V-cycle and the diagonal of the pressure mass matrix 
as FE sub-blocks in \cref{eq:precon_ansatz_diff}, fixing the choices
\begin{equation}
\widetilde{A}: = \widetilde{A}_{\text{mg}}, \quad \widetilde{S}: = D_p.
\end{equation}

\subsection{Stochastic Galerkin Matrices}
\label{subsec:precon_sg_matrices}
The SG preconditioners are also chosen according to complexity considerations and their spectral properties. 
In particular, we want to improve the condition of the SG matrices with 
respect to the discretization parameters $\stochasticDegree$ and $\nKL$ if possible. For the SG matrices 
based on a product of complete multivariate Hermite polynomials, there exist the following 
inclusion bounds \cite[Corollary 4.5]{PowellUllmann2010}:
\begin{equation}
\label{ev_bounds_G_m}
-g_\iPC \leq \frac{\vecg{a}^\text{T} G_\iPC \, \vecg{a}}{\vecg{a}^\text{T}\vecg{a}}
\leq g_\iPC, \quad \forall \vecg{a} \in \RR^{\nPC_z} \backslash \{\vecg{0}\},
\end{equation}
for $q=1,\dots,\nPC_\nu-1$, where 
$g_\iPC = \exp{(\nKL(\stochasticDegree +1)/2 +\frac12 \sum_{\iKL=1}^\nKL \stochasticDegree_\iKL)}$. As mentioned in 
section \ref{sec:SGFE_discretization} on page \pageref{eq:dim_sg_space}, the degrees $k_\iKL$ are the entries of the 
multi-index associated with the index $\iPC$. According to the bounds \cref{ev_bounds_G_m}, the eigenvalues of the 
SG matrices depend on the chaos degree $\stochasticDegree$ and the truncation index $\nKL$ of the KLE.

In the context of SGFE problems, the mean-based approximation \cite{PowellElman2009} is often 
used to construct preconditioners for SG matrices. To define an SG preconditioner, the mean information of the SGFE 
problem is used. We work with an orthonormal chaos basis and the corresponding SG matrix of the mean 
problem is the identity matrix, see \cref{eq:SGFEM_diffusion_matrix}. According to our structural 
ansatz \cref{eq:precon_ansatz_diff}, we thus define
\begin{equation}
\label{eq:mean_based_SG}
\widetilde{G}_A = \widetilde{G}_S = I. 
\end{equation}
We solely use this choice in the following although these preconditioners can not improve the condition 
of the SG matrices with respect to $\stochasticDegree$ or $\nKL$. This is because of two reasons: The mean-based 
preconditioner is extremely cheap to apply and -- to the best of our knowledge -- there is no practical preconditioner 
which can eliminate the ill-conditioning with respect to $\stochasticDegree$. There is still the potential ill-conditioning 
with respect to $\nKL$. However, our numerical experiments in \cref{sec:numerical_examples} suggest that the influence of 
$\nKL$ is not that severe.

\section{Eigenvalue Analysis for the SGFE Matrices}
\label{sec:ev_analysis}
In section \ref{sec:precon}, we fixed the structures and building blocks of our preconditioners. As a next 
step, we summarize our assumptions and choices to define the specific preconditioners we eventually use. 
Starting point for the construction of the preconditioners are the structures \cref{eq:precon_structure} and 
the substructural Kronecker product ansatz \cref{eq:precon_ansatz_diff}. 

Inserting $\widetilde{A}=\widetilde{A}_{\text{mg}}$ 
and $\widetilde{S}=D_p$ as FE preconditioners, and $\widetilde{G}_A = \widetilde{G}_S = I$ as SG preconditioners, we define:
\begin{equation}
\label{eq:block_precon}
\mathcal{P}_{\text{diag}} := \begin{bmatrix} \widetilde{\mathcal{A}}_{\text{mg}} & 0 \\
0 & \widetilde{\mathcal{S}}_p \end{bmatrix}, 
\quad
\mathcal{P}_{\text{tri}} := \begin{bmatrix} a\,\widetilde{\mathcal{A}}_{\text{mg}} & 0 \\ 
\mathcal{B} & -\widetilde{\mathcal{S}}_p \end{bmatrix},
\end{equation}
where 
\begin{align}
\label{eq:sgfe_tilde_A}
\widetilde{\mathcal{A}}_{\text{mg}}&:= I \otimes \widetilde{A}_{\text{mg}}, \\
\label{eq:sgfe_tilde_S}
\widetilde{\mathcal{S}}_p&:= I \otimes D_p.
\end{align}
Both, the negative sign of the Schur complement approximation $-\widetilde{\mathcal{S}}_p$ as well as the 
scalar $a \in \RR$ in the definition of the block triangular preconditioner $\mathcal{P}_{\text{tri}}$ are 
manipulations which are not necessary in general. However, they are essential for one of the iterative solvers 
we are using. The specific effects of these additional manipulations are specified in subsection \ref{subsec:block_tri}.

In order to asses the influence of different problem parameters on the spectrum of the preconditioned SGFE 
systems, we want to derive inclusion bounds for the eigenvalues of $\mathcal{P}_{\text{diag}}^{-1} \mathcal{C}$ 
and $\mathcal{P}_{\text{tri}}^{-1} \mathcal{C}$ defined in \cref{eq:block_matrix_form} and 
\cref{eq:block_precon}. We can do this using existing results from saddle point theory. For the block 
diagonal preconditioned problem, we use 
\cite[Theorem 3.2, Corollary 3.3 and Corollary 3.4]{PowellSilvester2003}. These results imply that the 
eigenvalues of $\mathcal{P}_{\text{diag}}^{-1} \mathcal{C}$ depend on the eigenvalues of 
 -- in our notation -- $\widetilde{\mathcal{A}}_{\text{mg}}^{-1}\mathcal{A}$ and 
$\widetilde{\mathcal{S}}_p^{-1}\mathcal{B}\widetilde{\mathcal{A}}_{\text{mg}}^{-1}\mathcal{B}^\text{T}$. 
However, in order to apply these results, both preconditioned sub-matrices must be positive definite.

To derive estimates on the eigenvalues of the block triangular preconditioned matrix, we want to apply 
\cite[Theorem 4.1]{Zulehner2001}. This result bounds the eigenvalues of 
$\mathcal{P}_{\text{tri}}^{-1} \mathcal{C}$ by the eigenvalues of the preconditioned SGFE Laplacian and the 
preconditioned SGFE Schur complement. In our setting, it can be applied if 
\begin{eqnarray}
\label{eq:block_tri_cond_i}
a\,\widetilde{\mathcal{A}}_{\text{mg}} &<& \mathcal{A} \leq \alpha_2 \widetilde{\mathcal{A}}_{\text{mg}}, \\
\label{eq:block_tri_cond_ii}
\hat{\gamma} \widetilde{\mathcal{S}}_p &\leq& \mathcal{B}\mathcal{A}^{-1}\mathcal{B}^\text{T} \leq
\hat{\Gamma} \widetilde{\mathcal{S}}_p,
\end{eqnarray}
where $a$ is the scaling introduced in \cref{eq:block_precon} and $\alpha_2$, $\hat{\gamma}$ and $\hat{\Gamma}$ 
are positive constants.
\subsection{The Block Diagonal Preconditioned SGFE System}
\label{subsec:ev_blk_diag}
First of all, we consider the preconditioned SGFE Laplacian. To derive bounds on the eigenvalues of 
$\widetilde{\mathcal{A}}_{\text{mg}}^{-1}\mathcal{A}$, we proceed as in \cite[Lemma~7.2]{MuellerEtAl2017}, 
\cite[Theorem~4.6]{PowellUllmann2010}.
\begin{lemma}
\label{lemma:eigenvalues_precon_diff}
Let the matrices $\mathcal{A}$ and $\widetilde{\mathcal{A}}_{\text{mg}}$ be defined according to
\cref{eq:SGFEM_diffusion_matrix} and \cref{eq:sgfe_tilde_A}, respectively. Then,
\begin{align}
\label{eq:ev_bounds_SGFE_diff_approx_diff}
\hat{\delta} \leq \frac{\vecg{v}^{\text{T}}\mathcal{A} \, \vecg{v}}
{\vecg{v}^{\text{T}} \widetilde{\mathcal{A}}_{\text{mg}} \, \vecg{v}} \leq \hat{\Delta}, \quad 
\forall \vecg{v} \in \RR^{\nPC \nFE_u} \backslash \{\vecg{0}\},
\end{align}
with
\begin{align} 
\label{eq:ev_bounds_SGFE_delta_hat}
\hat{\delta} &:= \underline{\delta}  > 0, \\
\label{eq:ev_bounds_SGFE_Delta_hat}
\hat{\Delta} &:= (\overline{\nu}_0 + \nu_\sigma) \Delta,
\end{align}
where $\underline{\delta}$ is a positive constant not further specified and 
$\nu_\sigma:=\sum_{\iPC=1}^{\nPC_\nu -1} g_\iPC \, \overline{\nu}_\iPC$.
\end{lemma}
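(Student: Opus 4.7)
The plan is to estimate the two sides of \cref{eq:ev_bounds_SGFE_diff_approx_diff} separately by completely different arguments. For the upper bound I would combine the FE spectral bound \cref{eq:bound_precon_weighed_Laplacian} with the SG spectral bound \cref{ev_bounds_G_m} through the Kronecker product structure of $\mathcal{A}$ and $\widetilde{\mathcal{A}}_{\text{mg}}$. For the lower bound I would rely on positive definiteness of $\mathcal{A}$, which is inherited from the discrete inf-sup stability assumption stated at the end of \cref{sec:SGFE_discretization}.

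For the upper bound I would decompose $\vecg{v}^{\text{T}} \mathcal{A} \vecg{v}$ according to \cref{eq:SGFEM_diffusion_matrix} into the mean term $\vecg{v}^{\text{T}}(I\otimes A_0)\vecg{v}$ and the fluctuation terms $\vecg{v}^{\text{T}}(G_\iPC\otimes A_\iPC)\vecg{v}$ for $\iPC = 1,\dots,\nPC_\nu-1$. The mean term is controlled by $\overline{\nu}_0 \Delta\, \vecg{v}^{\text{T}} \widetilde{\mathcal{A}}_{\text{mg}} \vecg{v}$ applying \cref{eq:bound_precon_weighed_Laplacian} block-wise. For each fluctuation term I would reshape $\vecg{v}\in\RR^{\nPC_z\nFE_u}$ as a matrix $V\in\RR^{\nFE_u\times\nPC_z}$ whose columns correspond to stochastic modes, rewrite the quadratic form as $\operatorname{tr}(G_\iPC V^{\text{T}}A_\iPC V)$, and diagonalize $G_\iPC = U_\iPC \Lambda_\iPC U_\iPC^{\text{T}}$ with $U_\iPC$ orthogonal. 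Setting $W = V U_\iPC$ then turns the expression into $\sum_j \lambda_j(G_\iPC)\, \vecg{w}_j^{\text{T}} A_\iPC \vecg{w}_j$. Applying $|\lambda_j(G_\iPC)| \leq g_\iPC$ from \cref{ev_bounds_G_m} and $|\vecg{w}_j^{\text{T}} A_\iPC \vecg{w}_j|\leq \overline{\nu}_\iPC \Delta\, \vecg{w}_j^{\text{T}} \widetilde{A}_{\text{mg}} \vecg{w}_j$ from \cref{eq:bound_precon_weighed_Laplacian}, then using orthogonality of $U_\iPC$ to convert $\sum_j \vecg{w}_j^{\text{T}} \widetilde{A}_{\text{mg}} \vecg{w}_j$ back into $\vecg{v}^{\text{T}} \widetilde{\mathcal{A}}_{\text{mg}} \vecg{v}$, yields the fluctuation bound $g_\iPC\overline{\nu}_\iPC\Delta\, \vecg{v}^{\text{T}}\widetilde{\mathcal{A}}_{\text{mg}}\vecg{v}$. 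Summing the mean contribution together with all $\iPC\geq 1$ contributions reproduces exactly $\hat{\Delta} = (\overline{\nu}_0 + \nu_\sigma) \Delta$.

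For the lower bound, the well-posedness of the discrete problem implies that $\mathcal{A}$ is symmetric positive definite. Since $\widetilde{\mathcal{A}}_{\text{mg}}$ is also symmetric positive definite, the generalized Rayleigh quotient has a strictly positive infimum over nonzero $\vecg{v}$, which we simply name $\underline{\delta}$. The main obstacle is that this argument produces no informative closed form for $\hat{\delta}$: a naive term-by-term bound of the shape $\underline{\nu}_0\delta - \nu_\sigma\Delta$ is essentially worthless, since it is typically negative once the fluctuations are not small. A sharper explicit lower bound would require exploiting cancellations across the $\iPC\geq 1$ contributions, which lies beyond the generic Kronecker-plus-spectral-equivalence framework used here; the lemma therefore settles for the qualitative positive constant $\underline{\delta}$, delivering an $h$-independent but otherwise unspecified floor on the spectrum.
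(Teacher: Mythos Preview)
Your argument is essentially the same as the paper's. For the upper bound you and the paper both split $\mathcal{A}$ into its mean and fluctuation Kronecker blocks and then combine the FE bound \cref{eq:bound_precon_weighed_Laplacian} with the SG bound \cref{ev_bounds_G_m}; the only cosmetic difference is that the paper invokes the Kronecker eigenvalue identity $\sigma(G_\iPC\otimes \widetilde{A}_{\text{mg}}^{-1}A_\iPC)=\{\lambda\mu:\lambda\in\sigma(G_\iPC),\mu\in\sigma(\widetilde{A}_{\text{mg}}^{-1}A_\iPC)\}$ directly, whereas you unpack that identity explicitly via the trace/diagonalization reshaping. For the lower bound you and the paper both note that the term-by-term estimate $\underline{\nu}_0\,\delta-\nu_\sigma\,\Delta$ is typically negative and therefore fall back on the coercivity of $\mathcal{A}$ coming from discrete well-posedness to assert existence of some $\underline{\delta}>0$.

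One overreach: your last sentence claims the resulting floor $\underline{\delta}$ is $h$-independent. The argument you give does not establish that, and the paper is explicit on this point: since nothing beyond positive definiteness of $\mathcal{A}$ and $\widetilde{\mathcal{A}}_{\text{mg}}$ is used, no parameter dependence of $\underline{\delta}$---in particular no $h$-independence---can be read off. The lemma deliberately leaves $\underline{\delta}$ ``not further specified'', and the subsequent discussion in the paper treats possible $h$-dependence of the lower bound as open. You should drop the $h$-independence claim.
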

\begin{proof}
First of all, we bound the eigenvalues from above:
\begin{align}
\label{eq:bounds_SGFE_diff_upper}
\lambda_{\text{max}}\left(\widetilde{\mathcal{A}}_{\text{mg}}^{-1} \, \mathcal{A} \right) &= 
\max_{\vecg{v}\in\RR^{\nPC \nFE_u} \backslash \{\vecg{0}\}}
\frac{\vecg{v}^{\text{T}}\left(I \otimes A_0 + \sum_{\iPC=1}^{\nPC_z-1}G_\iPC \otimes A_\iPC \right) \vecg{v}}
{\vecg{v}^{\text{T}}\left(I \otimes \widetilde{A}_{\text{mg}} \right) \vecg{v}} \\
\notag
&\leq \lambda_{\text{max}}\left(I \otimes \widetilde{A}_{\text{mg}}^{-1}A_0 \right) + 
\sum_{\iPC=1}^{\nPC_z -1}
\lambda_{\text{max}}\left( G_{\iPC} \otimes \widetilde{A}_{\text{mg}}^{-1}A_\iPC \right) \\
&\overset{\cref{eq:bound_precon_weighed_Laplacian},\cref{ev_bounds_G_m}}{\leq} \overline{\nu}_0 \, \Delta + 
\sum_{\iPC=1}^{\nPC_\nu -1} g_\iPC \, \overline{\nu}_\iPC \, \Delta = (\overline{\nu}_0 +  
\nu_\sigma) \Delta.
\end{align}
An analog procedure for the lower bound on the eigenvalues yields
\begin{equation}
\label{eq:bounds_SGFE_diff_lower}
\lambda_{\text{min}}\left(\widetilde{\mathcal{A}}_{\text{mg}}^{-1} \, \mathcal{A} \right) \geq 
\underline{\nu}_0 \, \delta - \nu_\sigma \, \Delta.
\end{equation}
However, due to the rough bounds $g_\iPC$ entering $\nu_\sigma$, expression \cref{eq:bounds_SGFE_diff_lower} is 
likely to be negative. This does not contradict the theory, but the results for the block diagonal preconditioned saddle point 
problem do not hold for a negative lower bound. The discrete well-posedness assumption ensures the existence of 
a positive lower bound $\lambda_{\text{min}}\left( \mathcal{A} \right) \geq \underline{\alpha} > 0$ 
as it implies discrete coercivity. As $\widetilde{\mathcal{A}}_{\text{mg}}^{-1}$ is positive definite as well, 
there is also a positive constant $\underline{\delta}$ fulfilling
\begin{equation}
\label{eq:bounds_SGFE_diff_lower_positive}
\lambda_{\text{min}}\left(\widetilde{\mathcal{A}}_{\text{mg}}^{-1} \, \mathcal{A} \right) \geq 
\underline{\delta} > 0,
\end{equation}
such that the result can be applied. However, we do not have any further information on $\underline{\delta}$, especially 
not on the parameter dependencies hidden in the bound.
\vspace{-0.65cm}
\flushright $\blacksquare$
\end{proof}
Bounds on the eigenvalues of 
$\widetilde{\mathcal{S}}_p^{-1}\mathcal{B}\widetilde{\mathcal{A}}_{\text{mg}}^{-1}\mathcal{B}^\text{T}$ can 
be derived as in \cite[Lemma 7.3]{MuellerEtAl2017}:
\begin{lemma}\cite[Lemma 7.3]{MuellerEtAl2017}
\label{lemma:eigenvalues_precon_quasi_Schur}
Let $\widetilde{\mathcal{A}}_{\text{mg}}$ and $\widetilde{S}_p$ be defined as in 
\cref{eq:sgfe_tilde_A} and \cref{eq:sgfe_tilde_S}. Then
\begin{equation}
\label{eq:bounds_SGFE_quasi_Schur_approx_Schur}
\delta \, \theta \, \gamma^2 \leq \frac{\vecg{q}^{\text{T}} \mathcal{B}\widetilde{\mathcal{A}}_{\text{mg}}^{-1}
\mathcal{B}^{\text{T}}\vecg{q}}
{\vecg{q}^{\text{T}}\widetilde{S}_p \vecg{q}} 
\leq \Delta \, \Theta, \quad \forall \vecg{q} \in \RR^{\nPC \nFE_p} \backslash \{\vecg{0},\vecg{1}\},
\end{equation}
\end{lemma}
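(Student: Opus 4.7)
The plan is to exploit the Kronecker product structure to reduce the SGFE Schur-type quotient to its FE counterpart, and then to chain two spectral equivalences: (i) the one between $A$ and $\widetilde{A}_{\text{mg}}$ established in \cref{eq:upper_bound_Diff_approxDiff}, and (ii) the one between the FE Schur complement $BA^{-1}B^{\text{T}}$ and the diagonal pressure mass matrix $D_p$ established in \cref{eq:bound_Schur_diagMass}.

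First I would use the fact that $\mathcal{B} = I \otimes B$, $\widetilde{\mathcal{A}}_{\text{mg}} = I \otimes \widetilde{A}_{\text{mg}}$, and $\widetilde{\mathcal{S}}_p = I \otimes D_p$, together with the mixed-product rule for Kronecker products, to rewrite
\begin{equation*}
\mathcal{B}\widetilde{\mathcal{A}}_{\text{mg}}^{-1}\mathcal{B}^{\text{T}} = I \otimes \bigl(B\widetilde{A}_{\text{mg}}^{-1}B^{\text{T}}\bigr).
\end{equation*}
Because both factors of each Kronecker product are symmetric positive (semi)definite and the first factor is the identity, the extremal values of the generalized Rayleigh quotient in \cref{eq:bounds_SGFE_quasi_Schur_approx_Schur} coincide with the extremal values of the purely FE quotient $\vecg{p}^{\text{T}}B\widetilde{A}_{\text{mg}}^{-1}B^{\text{T}}\vecg{p} / \vecg{p}^{\text{T}}D_p\vecg{p}$ over $\vecg{p}\in\RR^{\nFE_p}\setminus\{\vecg{0},\vecg{1}\}$. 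So it suffices to bound this FE quotient.

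Next I would translate the spectral equivalence \cref{eq:upper_bound_Diff_approxDiff} into the operator inequality $\delta \widetilde{A}_{\text{mg}} \preceq A \preceq \Delta \widetilde{A}_{\text{mg}}$ and invert it to obtain
\begin{equation*}
\tfrac{1}{\Delta}\widetilde{A}_{\text{mg}}^{-1} \preceq A^{-1} \preceq \tfrac{1}{\delta}\widetilde{A}_{\text{mg}}^{-1}.
\end{equation*}
Sandwiching between $B$ and $B^{\text{T}}$ preserves the ordering, yielding $\delta\, BA^{-1}B^{\text{T}} \preceq B\widetilde{A}_{\text{mg}}^{-1}B^{\text{T}} \preceq \Delta\, BA^{-1}B^{\text{T}}$. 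Combining this chain with \cref{eq:bound_Schur_diagMass}, which gives $\theta\gamma^2 D_p \preceq BA^{-1}B^{\text{T}} \preceq \Theta D_p$ on the relevant subspace, produces
\begin{equation*}
\delta\theta\gamma^2 \, D_p \preceq B\widetilde{A}_{\text{mg}}^{-1}B^{\text{T}} \preceq \Delta\Theta \, D_p,
\end{equation*}
which is exactly the claimed bound after taking Rayleigh quotients and transporting back through the Kronecker structure.

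I do not anticipate any serious obstacle: the argument is a clean composition of results already established in the excerpt. The only points that require a little care are keeping track of the excluded hydrostatic mode when passing through \cref{eq:bound_Schur_diagMass} (this is why the set $\RR^{\nPC\nFE_p}\setminus\{\vecg{0},\vecg{1}\}$ appears in the statement), and verifying that the Kronecker reduction to the FE quotient preserves this excluded subspace, which it does because $I \otimes (\cdot)$ acts as the identity on the stochastic factor.
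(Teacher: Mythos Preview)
Your argument is correct and is the natural one: reduce via the Kronecker structure $\mathcal{B}\widetilde{\mathcal{A}}_{\text{mg}}^{-1}\mathcal{B}^{\text{T}} = I\otimes(B\widetilde{A}_{\text{mg}}^{-1}B^{\text{T}})$, then chain \cref{eq:upper_bound_Diff_approxDiff} (after inversion and congruence by $B,\ B^{\text{T}}$) with \cref{eq:bound_Schur_diagMass}. The paper does not give its own proof here but simply defers to \cite[Lemma~7.3]{MuellerEtAl2017}; your approach is exactly the expected one for that result. One minor remark: the hydrostatic null space of $I\otimes B^{\text{T}}$ is in fact $\nPC_z$-dimensional (all vectors $a\otimes\vecg{1}_{\nFE_p}$), not just the span of the global constant, so the exclusion $\backslash\{\vecg{0},\vecg{1}\}$ in the statement is already a slight abuse of notation inherited from the paper's convention; your proof is unaffected, but your final sentence about the Kronecker reduction ``preserving the excluded subspace'' could be sharpened accordingly.
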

\begin{proof}
See proof of \cite[Lemma 7.3]{MuellerEtAl2017}. 
\vspace{-0.5cm}
\flushright $\blacksquare$
\end{proof}
The constants in the bounds in \eqref{eq:bounds_SGFE_quasi_Schur_approx_Schur} only depend on the degree and 
type of finite elements we use and on the shape of the considered domain. They do not depend on discretization or 
modeling parameters.

Combining Lemma \ref{lemma:eigenvalues_precon_diff} and Lemma \ref{lemma:eigenvalues_precon_quasi_Schur} with the 
results \cite[Theorem 3.2, Corollary 3.3 and Corollary 3.4]{PowellSilvester2003}, we find that the eigenvalues of 
$\mathcal{P}_{\text{diag}}^{-1} \mathcal{C}$ are bounded by a combination of \cref{eq:ev_bounds_SGFE_delta_hat}, 
\cref{eq:ev_bounds_SGFE_Delta_hat} and the bounds in \cref{eq:bounds_SGFE_quasi_Schur_approx_Schur}. The bounds in 
\cref{eq:bounds_SGFE_quasi_Schur_approx_Schur} are parameter independent, so we look into 
\cref{eq:ev_bounds_SGFE_delta_hat} and \cref{eq:ev_bounds_SGFE_Delta_hat}. The bound \cref{eq:ev_bounds_SGFE_Delta_hat} 
suggests that the eigenvalues change with the functions $\nu_\iPC$, $\iPC=0,\dots,\nPC_z-1$ and the parameters contained in 
the SG matrix bound $g_\iPC$. Looking at the definition \eqref{ev_bounds_G_m}, the eigenvalues might thus be influenced by the chaos 
degree $k$ and the KLE truncation index $\nKL$. Lastly, we consider \cref{eq:ev_bounds_SGFE_delta_hat}:
As we do not have information on the parameter dependencies hidden in $\underline{\delta}$, any other problem parameter -- 
such as the mesh width $h$ -- can potentially influence the eigenvalues.

\subsection{The Block Triangular Preconditioned SGFE System}
\label{subsec:block_tri}
In order to bound the eigenvalues of $\mathcal{P}_{\text{tri}}^{-1} \mathcal{C}$ using \cite[Theorem 4.1]{Zulehner2001},
we need to fulfill \eqref{eq:block_tri_cond_i} and \eqref{eq:block_tri_cond_ii}. The right bound of 
\eqref{eq:ev_bounds_SGFE_diff_approx_diff} is an upper bound fulfilling \eqref{eq:block_tri_cond_i} but the left bound of 
\eqref{eq:ev_bounds_SGFE_diff_approx_diff} is not necessarily a lower bound fulfilling \eqref{eq:block_tri_cond_i}. 
The lower bound in \eqref{eq:block_tri_cond_i} is in fact the stronger condition:
\begin{equation}
\frac{\vecg{v}^{\text{T}}\mathcal{A} \, \vecg{v}} {\vecg{v}^{\text{T}} a \widetilde{\mathcal{A}}_{\text{mg}} 
\, \vecg{v}} > 1 , \quad 
\forall \vecg{v} \in \RR^{\nPC \nFE_u} \backslash \{\vecg{0}\}.
\end{equation}
or equivalently 
\begin{equation}
\label{eq:condition_pd_bpcg}
\lambda_{\text{min}}\left((a \widetilde{\mathcal{A}}_{\text{mg}})^{-1} \mathcal{A}\right) = 
\min_{\vecg{v}\in \RR^{\nPC \nFE_u} \backslash \{\vecg{0}\}} 
\frac{\vecg{v}^{\text{T}}\mathcal{A} \vecg{v}}{\vecg{v}^{\text{T}} a \widetilde{\mathcal{A}}_{\text{mg}}
\vecg{v}} > 1.
\end{equation}
Now, the scalar $a$ comes into play. If it is chosen such that 
$a = \kappa \, \lambda_{\text{min}}(\widetilde{\mathcal{A}}_{\text{mg}}^{-1} \mathcal{A})$ with 
$ 0< \kappa < 1$, then a scaled preconditioner 
$\kappa \, \lambda_{\text{min}}(\widetilde{\mathcal{A}}_{\text{mg}}^{-1} \mathcal{A}) \, \widetilde{\mathcal{A}}_{\text{mg}}$
fulfills \cref{eq:condition_pd_bpcg} with 
\begin{equation}
\lambda_{\text{min}}\left((\kappa \, \lambda_{\text{min}}(\widetilde{\mathcal{A}}_{\text{mg}}^{-1} \mathcal{A}) \, \widetilde{\mathcal{A}}_{\text{mg}})^{-1} \mathcal{A}\right) = 
\kappa^{-1} > 1.
\end{equation}
However, as we have no quantitative access to the analytical lower bound \cref{eq:ev_bounds_SGFE_delta_hat}, we need 
to estimate the minimum eigenvalue of $\widetilde{\mathcal{A}}_{\text{mg}}^{-1} \mathcal{A}$ numerically. A 
scaled preconditioner with a numerically computed positive 
$a=a^* < \lambda_{\text{min}}(\widetilde{\mathcal{A}}_{\text{mg}}^{-1} \mathcal{A})$ then yields a modified 
version of \cref{eq:ev_bounds_SGFE_diff_approx_diff}, namely
\begin{align}
\label{eq:ev_bounds_SGFE_diff_approx_diff_scaled}
1 < \frac{\hat{\delta}}{a^*} \leq \frac{\vecg{v}^{\text{T}}\mathcal{A} \, \vecg{v}}
{\vecg{v}^{\text{T}} a^* \, \widetilde{\mathcal{A}}_{\text{mg}} \, \vecg{v}} \leq \frac{\hat{\Delta}}{a^*} , \quad 
\forall \vecg{v} \in \RR^{\nPC \nFE_u} \backslash \{\vecg{0}\},
\end{align}
which fulfills \eqref{eq:block_tri_cond_i}.

As we have assumed well-posedness of the discrete SGFE problem, a discrete inf-sup condition is fulfilled 
\cite[Theorem 3.18]{John2016}. Using the discrete representations of the norms and inner 
products in the FE and SG basis, we can rearrange the discrete inf-sup condition to derive a relation exactly as in 
\cite[Lemma 3.48 and section 3.6.6]{John2016}. In our SGFE setting, the spectral equivalence has the form 
\begin{equation}
\label{eq:spec_equiv_Schur}
\beta^2 \leq \frac{\vecg{q}^{\text{T}} (I \otimes B A^{-1} B^{\text{T}}) \vecg{q}}{\vecg{q}^{\text{T}} 
(I \otimes M_p) \vecg{q}} \leq 1, \quad \forall \vecg{q} \in \RR^{\nPC \nFE_p}\backslash\{\vecg{0},\vecg{1}\}.
\end{equation}
Here, $\beta$ is the inf-sup constant of the mixed SGFE problem. We use \cref{eq:spec_equiv_Schur} to derive 
bounds for the preconditioned SGFE Schur complement in the following lemma.
\begin{lemma}
\label{lemma:eigenvalues_precon_Schur}
Let the Schur complement be defined by $\mathcal{S} = -\mathcal{B}  \mathcal{A}^{-1} \mathcal{B}^{\text{T}}$
with building blocks $\mathcal{A}$ and $\mathcal{B}$ in \cref{eq:SGFEM_diffusion_matrix} and 
\cref{eq:SGFEM_gradient_matrix}, and $\widetilde{\mathcal{S}}_p = I \otimes D_p$ according to \cref{eq:sgfe_tilde_S}. 
Then,
\begin{equation}
\label{eq:ev_bounds_SGFE_Schur_approx_Schur}
\hat{\gamma} \leq \frac{\vecg{q}^\text{T} \, \mathcal{B}  \mathcal{A}^{-1} \mathcal{B}^{\text{T}} \, \vecg{q}}
{\vecg{q}^\text{T} \left( I \otimes D_p \right) \vecg{q}} \leq \hat{\Gamma},
\quad \vecg{q} \in\RR^{\nPC \nFE_p}\backslash\{\vecg{0},\vecg{1}\},
\end{equation}
with
\begin{align} 
\label{eq:ev_bounds_SGFE_gamma_hat}
\hat{\gamma} &:= (\underline{\nu}_0 + \nu_\sigma)^{-1} \theta \, 
\beta^2 , \\
\label{eq:ev_bounds_SGFE_Gamma_hat}
\hat{\Gamma} &:= \underline{\delta}^{-1} \, \Delta \, \Theta.
\end{align}
\end{lemma}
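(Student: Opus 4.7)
The plan is to sandwich $\mathcal{B}\mathcal{A}^{-1}\mathcal{B}^{\text{T}}$ between scalar multiples of $\widetilde{\mathcal{S}}_p = I \otimes D_p$ via two spectral-equivalence steps that decouple the SG and FE factors. The upper bound will follow immediately by combining Lemma \ref{lemma:eigenvalues_precon_diff} with Lemma \ref{lemma:eigenvalues_precon_quasi_Schur}. The lower bound requires a companion upper bound on $\mathcal{A}$ relative to $I \otimes A$ and then a chain through the SGFE inf-sup estimate \cref{eq:spec_equiv_Schur} and the pressure mass-diagonal estimate \cref{eq:bound_Mass_diagMass}.

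For the upper bound I would start from the left inequality in \cref{eq:ev_bounds_SGFE_diff_approx_diff}, which reads $\hat{\delta}\, \widetilde{\mathcal{A}}_{\text{mg}} \leq \mathcal{A}$ in the sense of symmetric positive-definite quadratic forms, and invert it to $\mathcal{A}^{-1} \leq \hat{\delta}^{-1}\, \widetilde{\mathcal{A}}_{\text{mg}}^{-1}$. Conjugating by $\mathcal{B}$ on both sides preserves the inequality, and Lemma \ref{lemma:eigenvalues_precon_quasi_Schur} then yields
\[
\vecg{q}^{\text{T}}\mathcal{B}\mathcal{A}^{-1}\mathcal{B}^{\text{T}}\vecg{q} \leq \hat{\delta}^{-1}\, \vecg{q}^{\text{T}}\mathcal{B}\widetilde{\mathcal{A}}_{\text{mg}}^{-1}\mathcal{B}^{\text{T}}\vecg{q} \leq \hat{\delta}^{-1}\Delta\Theta\, \vecg{q}^{\text{T}}\widetilde{\mathcal{S}}_p\vecg{q},
\]
for all admissible $\vecg{q}$, which delivers $\hat{\Gamma} = \underline{\delta}^{-1}\Delta\Theta$.

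For the lower bound I would mimic the upper-bound estimate from the proof of Lemma \ref{lemma:eigenvalues_precon_diff}, but with $I \otimes A$ (the deterministic mean Laplacian in the FE factor) in place of $\widetilde{\mathcal{A}}_{\text{mg}}$ as the reference matrix. Combining the $L^\infty$-bounds on the weight functions $\nu_\iPC$ with the spectral bound \cref{ev_bounds_G_m} on each $G_\iPC$ gives an inequality of the form $\mathcal{A} \leq (\underline{\nu}_0 + \nu_\sigma)(I \otimes A)$, which inverts to $\mathcal{A}^{-1} \geq (\underline{\nu}_0 + \nu_\sigma)^{-1}(I \otimes A^{-1})$. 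Using $\mathcal{B} = I \otimes B$ and then chaining with \cref{eq:spec_equiv_Schur} and \cref{eq:bound_Mass_diagMass} produces
\[
\mathcal{B}\mathcal{A}^{-1}\mathcal{B}^{\text{T}} \geq (\underline{\nu}_0 + \nu_\sigma)^{-1}(I \otimes B A^{-1} B^{\text{T}}) \geq (\underline{\nu}_0 + \nu_\sigma)^{-1}\beta^2 (I \otimes M_p) \geq (\underline{\nu}_0 + \nu_\sigma)^{-1}\beta^2\theta\, (I \otimes D_p),
\]
which matches $\hat{\gamma}$ in \cref{eq:ev_bounds_SGFE_gamma_hat}.

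The main obstacle is the companion upper bound $\mathcal{A} \leq (\underline{\nu}_0 + \nu_\sigma)(I \otimes A)$, because the stochastic fluctuation $\sum_{\iPC \geq 1} G_\iPC \otimes A_\iPC$ is indefinite in both tensor factors. I would bound each mixed quadratic form $\vecg{v}^{\text{T}}(G_\iPC \otimes A_\iPC)\vecg{v}$ by $g_\iPC\, \overline{\nu}_\iPC\, \vecg{v}^{\text{T}}(I \otimes A)\vecg{v}$, using the two-sided bound $-g_\iPC I \leq G_\iPC \leq g_\iPC I$ on the SG factor and the $L^\infty$-bound $-\overline{\nu}_\iPC A \leq A_\iPC \leq \overline{\nu}_\iPC A$ on the weighted Laplacian, and then summing over $\iPC$ exactly as in \cref{eq:bounds_SGFE_diff_upper}. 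Once this bound is in place, the remaining inf-sup and mass-diagonal steps are standard transfers from the FE-only setting of Lemma \ref{lemma:eigenvalues_precon_quasi_Schur}.
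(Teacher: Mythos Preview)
Your proposal is correct and follows essentially the same route as the paper. Both obtain the lower bound by establishing $\mathcal{A} \leq (\overline{\nu}_0 + \nu_\sigma)\,(I \otimes A)$ via exactly the term-by-term Kronecker estimate you describe (this is the paper's \cref{eq:ev_bounds_SGFE_Schur_2}--\cref{eq:ev_bounds_SGFE_Schur_5}), inverting, conjugating by $\mathcal{B} = I \otimes B$, and chaining through \cref{eq:spec_equiv_Schur} and \cref{eq:bound_Mass_diagMass}; for the upper bound the paper also routes through $I\otimes A$ (combining \cref{eq:ev_bounds_SGFE_diff_approx_diff} with \cref{eq:upper_bound_Diff_approxDiff} to compare $\mathcal{A}^{-1}$ with $(I\otimes A)^{-1}$) rather than invoking Lemma~\ref{lemma:eigenvalues_precon_quasi_Schur} as a black box, but the two paths are equivalent and yield the same constant $\underline{\delta}^{-1}\Delta\Theta$.
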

\begin{proof}
We want to modify \cref{eq:spec_equiv_Schur} in order to derive \cref{eq:ev_bounds_SGFE_Schur_approx_Schur}. 
The denominators are matched directly via \cref{eq:bound_Mass_diagMass} as the additional identity matrices do not 
change the bounds. The connection between the nominators is not that obvious. We start by considering
\begin{equation}
\label{eq:ev_bounds_SGFE_Schur_2} 
(I \otimes A)^{-1} \mathcal{A}
= I \otimes A^{-1}A_0 + \sum_{\iPC=1}^{\nPC_\nu -1} G_\iPC \otimes A^{-1}A_\iPC.
\end{equation}
Extracting the weighting factors from the FE Laplacians yields
\begin{equation}
\label{eq:ev_bounds_SGFE_Schur_4} 
- \overline{\nu}_\iPC \leq 
\frac{\vecg{v}^{\text{T}}A_\iPC  \vecg{v}}
{\vecg{v}^{\text{T}} A \,\vecg{v}} \leq \overline{\nu}_\iPC,
\end{equation}
for all $\iPC=1,\dots\nPC_\nu -1$ and for all $\vecg{v} \in \RR^{\nFE_u} \backslash \{\vecg{0}\}$. By 
combining \cref{eq:ev_bounds_SGFE_Schur_4} with the representation 
\cref{eq:ev_bounds_SGFE_Schur_2} and \cref{ev_bounds_G_m}, we get
\begin{equation}
\label{eq:ev_bounds_SGFE_Schur_5} 
\frac{\vecg{v}^{\text{T}}(I\otimes A)^{-1} \vecg{v}}
{\vecg{v}^{\text{T}} \mathcal{A}^{-1} \vecg{v}} \leq \overline{\nu}_0 + 
\sum_{\iPC=1}^{\nPC_\nu-1} g_\iPC \, \overline{\nu}_\iPC = \overline{\nu}_0 + \nu_\sigma,
\end{equation}
for all $\vecg{v} \in \RR^{\nPC \nFE_u} \backslash \{\vecg{0}\}$. For the lower bound, we start with 
\cref{eq:ev_bounds_SGFE_diff_approx_diff}: 
\begin{align}
\label{eq:ev_bounds_SGFE_Schur_6} 
 \frac{\vecg{v}^{\text{T}}\mathcal{A} \, \vecg{v}}
{\vecg{v}^{\text{T}} \widetilde{\mathcal{A}}_{\text{mg}} \, \vecg{v}} \geq \underline{\delta} , \quad 
\forall \vecg{v} \in \RR^{\nPC \nFE_u} \backslash \{\vecg{0}\}.
\end{align}
As $\widetilde{\mathcal{A}}_{\text{mg}} = I \otimes \widetilde{A}_{\text{mg}}$, we can use the inverse of 
\cref{eq:upper_bound_Diff_approxDiff} with \cref{eq:ev_bounds_SGFE_Schur_6} and derive
\begin{align}
\label{eq:ev_bounds_SGFE_Schur_7} 
\frac{\vecg{v}^{\text{T}}(I\otimes A)^{-1} \vecg{v}}
{\vecg{v}^{\text{T}} \mathcal{A}^{-1} \vecg{v}} \geq \underline{\delta} \, \Delta^{-1} , \quad 
\forall \vecg{v} \in \RR^{\nPC \nFE_u} \backslash \{\vecg{0}\}.
\end{align}
Now, we use the relation $\vecg{v} = \mathcal{B}^{\text{T}}\vecg{q}$ in 
\cref{eq:ev_bounds_SGFE_Schur_5} and \cref{eq:ev_bounds_SGFE_Schur_6} to establish:
\begin{equation}
\label{eq:ev_bounds_SGFE_Schur_8} 
(\underline{\nu}_0 + 
\nu_\sigma)^{-1}
\leq \frac{\vecg{q}^{\text{T}} \mathcal{B}\mathcal{A}^{-1} \mathcal{B}^{\text{T}}\vecg{q}}
{\vecg{q}^{\text{T}} \mathcal{B} (I\otimes A)^{-1} \mathcal{B}^{\text{T}}\vecg{q}} \leq
\underline{\delta}^{-1} \, \Delta, 
\end{equation}
for all $\vecg{q} \in \RR^{\nPC \nFE_p} \backslash \{\vecg{0},\vecg{1}\}$. 
Multiplying the positive expressions 
\cref{eq:spec_equiv_Schur} and \cref{eq:ev_bounds_SGFE_Schur_8} and using \cref{eq:bound_Mass_diagMass} 
then yields the assertion. 
\vspace{-0.5cm}
\flushright $\blacksquare$
\end{proof}
Using \eqref{eq:ev_bounds_SGFE_diff_approx_diff_scaled} and Lemma \ref{lemma:eigenvalues_precon_Schur} with 
the result \cite[Theorem 4.1]{Zulehner2001} implies, that the eigenvalues of $\mathcal{P}_{\text{tri}}^{-1} \mathcal{C}$ 
with $a=a^* < \lambda_{\text{min}}(\widetilde{\mathcal{A}}_{\text{mg}}^{-1} \mathcal{A})$ 
can be bounded by a combination of the bounds in \cref{eq:ev_bounds_SGFE_diff_approx_diff_scaled}, 
\cref{eq:ev_bounds_SGFE_gamma_hat} and \cref{eq:ev_bounds_SGFE_Gamma_hat}. The upper bound in 
\cref{eq:ev_bounds_SGFE_diff_approx_diff_scaled} and \cref{eq:ev_bounds_SGFE_gamma_hat} suggest that the eigenvalues are influenced 
by the scaling $a^*$, the chaos degree $k$ and the KLE truncation index 
$\nKL$ hidden in the bounds $g_\iPC$, and the functions~$\nu_\iPC$. However, we do 
not know which parameters are hidden in \cref{eq:ev_bounds_SGFE_delta_hat}
and \cref{eq:ev_bounds_SGFE_Gamma_hat}. Therefore, we can not exclude the possibility that the eigenvalues 
change with other problem parameters such as the mesh width~$h$.

We now introduce the matrix
\begin{align}
\label{eq:inner_product_matrix}
\mathcal{H} := \begin{bmatrix} \mathcal{A} - a \widetilde{\mathcal{A}}_{\text{mg}} & 0 \\ 0 & 
\widetilde{\mathcal{S}}_p \end{bmatrix},
\end{align}
with $a = \kappa \, \lambda_{\text{min}}(\widetilde{\mathcal{A}}_{\text{mg}}^{-1} \mathcal{A})$, $0<\kappa<1$, 
$\widetilde{\mathcal{A}}_{\text{mg}}$ and $\widetilde{\mathcal{S}}_p$ according to \cref{eq:sgfe_tilde_A} 
and \cref{eq:sgfe_tilde_S}. We need this matrix in the following lemma, which can be proven 
because of the modifications to the block triangular preconditioner $\mathcal{P}_{\text{tri}}$ in 
\cref{eq:block_precon}.
\begin{lemma}
\label{lemma:ipcg_conditions}
Let $a$ in \cref{eq:block_precon} be set to 
$a = \kappa \, \lambda_{\text{min}}(\widetilde{\mathcal{A}}_{\text{mg}}^{-1} \mathcal{A})$, with 
$0 < \kappa< 1$. Then, the matrix $\mathcal{H}$ in
\cref{eq:inner_product_matrix} defines an inner product and the triangular preconditioned system 
matrix $\mathcal{P}_{\text{tri}}^{-1} \mathcal{C}$ with $\mathcal{C}$ and $\mathcal{P}_{\text{tri}}$ according to 
\cref{eq:block_matrix_form} and \cref{eq:block_precon}, is $\mathcal{H}$-symmetric and 
$\mathcal{H}$-positive definite, i.e.
\begin{align}
\label{eq:inner_product_cond_2}
\hspace{2cm} &\mathcal{H} \mathcal{P}_{\text{tri}}^{-1} \mathcal{C} = (\mathcal{P}_{\text{tri}}^{-1}\mathcal{C})^T\mathcal{H}, \\
\label{eq:inner_product_cond_1}
&\vecg{z}^{\text{T}} \mathcal{H} \mathcal{P}_{\text{tri}}^{-1} \mathcal{C} \vecg{z} > 0, \hspace{2cm} \forall 
\vecg{z} \in \RR^{\nPC \nFE} \backslash\{\vecg{0}\}.
\end{align}
\end{lemma}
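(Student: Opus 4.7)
The plan is to verify the three facts in turn: that $\mathcal{H}$ is itself symmetric positive definite (so $\langle\cdot,\cdot\rangle_{\mathcal{H}}$ really is an inner product), that $\mathcal{P}_{\text{tri}}^{-1}\mathcal{C}$ is $\mathcal{H}$-symmetric, and that it is $\mathcal{H}$-positive definite. The first step is the easy one: $\widetilde{\mathcal{S}}_p = I\otimes D_p$ is SPD because $D_p$ is the positive diagonal of the pressure mass matrix, while the $(1,1)$-block $E:=\mathcal{A}-a\widetilde{\mathcal{A}}_{\text{mg}}$ is symmetric by inheritance and positive definite by the choice $a=\kappa\,\lambda_{\min}(\widetilde{\mathcal{A}}_{\text{mg}}^{-1}\mathcal{A})$ with $0<\kappa<1$: for every $\vecg{v}\ne\vecg{0}$,
$$
\vecg{v}^T\mathcal{A}\vecg{v}\;\geq\;\lambda_{\min}\bigl(\widetilde{\mathcal{A}}_{\text{mg}}^{-1}\mathcal{A}\bigr)\,\vecg{v}^T\widetilde{\mathcal{A}}_{\text{mg}}\vecg{v}\;>\;a\,\vecg{v}^T\widetilde{\mathcal{A}}_{\text{mg}}\vecg{v}.
$$
This is precisely the role played by the scaling factor $a$ in the modified preconditioner $\mathcal{P}_{\text{tri}}$.

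For the remaining two claims I would compute $\mathcal{H}\mathcal{P}_{\text{tri}}^{-1}\mathcal{C}$ in closed block form. Setting $W:=(a\widetilde{\mathcal{A}}_{\text{mg}})^{-1}$, inversion of the block-triangular preconditioner yields
$$
\mathcal{P}_{\text{tri}}^{-1}=\begin{bmatrix} W & 0\\ \widetilde{\mathcal{S}}_p^{-1}\mathcal{B}W & -\widetilde{\mathcal{S}}_p^{-1}\end{bmatrix},
$$
and a direct block multiplication gives
$$
\mathcal{H}\mathcal{P}_{\text{tri}}^{-1}\mathcal{C}=\begin{bmatrix} E+EWE & EW\mathcal{B}^T\\ \mathcal{B}WE & \mathcal{B}W\mathcal{B}^T\end{bmatrix}.
$$
Symmetry of each diagonal block, and the transpose relation between the two off-diagonals, follow at once from the symmetry of $E$, $W$, and $\mathcal{A}$; this establishes \eqref{eq:inner_product_cond_2}.

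For \eqref{eq:inner_product_cond_1} I would split $\vecg{z}=(\vecg{u}^T,\vecg{p}^T)^T$ and complete the square, again using the symmetry of $W$ and $E$:
$$
\vecg{z}^T\mathcal{H}\mathcal{P}_{\text{tri}}^{-1}\mathcal{C}\vecg{z} \;=\; \vecg{u}^T E\vecg{u}\;+\;(E\vecg{u}+\mathcal{B}^T\vecg{p})^T W (E\vecg{u}+\mathcal{B}^T\vecg{p}).
$$
Both summands are nonnegative since $E$ and $W$ are SPD. If the sum vanishes, the first term forces $\vecg{u}=\vecg{0}$ and the second then reduces to $\vecg{p}^T\mathcal{B}W\mathcal{B}^T\vecg{p}$, which is zero only if $\mathcal{B}^T\vecg{p}=\vecg{0}$; the discrete inf-sup condition that underlies the assumed well-posedness of the SGFE problem, in the form~\eqref{eq:spec_equiv_Schur}, then implies $\vecg{p}=\vecg{0}$, contradicting $\vecg{z}\ne\vecg{0}$.

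The main obstacle is really just organizing the block-matrix algebra cleanly enough to read off the symmetric closed form of $\mathcal{H}\mathcal{P}_{\text{tri}}^{-1}\mathcal{C}$ together with the completing-the-square identity; once those are written down, $\mathcal{H}$-symmetry is immediate, and $\mathcal{H}$-positive definiteness reduces to the positive definiteness of $E$ (which is the whole reason for introducing the scaling $a$ and the negative sign in front of $\widetilde{\mathcal{S}}_p$ in $\mathcal{P}_{\text{tri}}$) together with injectivity of $\mathcal{B}^T$ on the discrete pressure space supplied by the discrete inf-sup condition.
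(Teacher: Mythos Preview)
Your argument is correct and is precisely the classical Bramble--Pasciak verification: the block computation of $\mathcal{H}\mathcal{P}_{\text{tri}}^{-1}\mathcal{C}$, the symmetry read-off, and the completing-the-square identity all check out, and the appeal to the discrete inf-sup condition to exclude a nontrivial pressure in the kernel of $\mathcal{B}^T$ is the right closing step (with the usual understanding that the hydrostatic mode is already removed from the discrete pressure space). The paper itself does not give a proof but simply cites \cite[Lemma~8.1]{MuellerEtAl2017}; your write-up is the standard direct argument one expects to find there.
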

\begin{proof}
See proof of \cite[Lemma 8.1]{MuellerEtAl2017}.
\vspace{-0.4cm}
\flushright $\blacksquare$
\end{proof}
Bramble and Pasciak discovered the extraordinary effect of the matrix $\mathcal{H}$ in their 1988 paper 
\cite{BramblePasciak1988}. They considered a triangular preconditioned problem structurally equivalent 
to $\mathcal{P}_{\text{tri}}^{-1} \mathcal{C}$. Due to the conditions verified in Lemma \ref{lemma:ipcg_conditions}, 
a conjugate gradient method exists in the $\mathcal{H} \mathcal{P}_{\text{tri}}^{-1} \mathcal{C}$-inner product 
\cite{FaberManteuffel1984}. 
The Bramble-Pasciak conjugate gradient (BPCG) method introduced in \cite{BramblePasciak1988} can thus also be 
applied to the SGFE problem considered in this work.

\section{Iterative Solvers}
\label{sec:iterative_solvers}
In the following, we discuss two different iterative solvers for our SGFE Stokes problem. Similar 
investigations were conducted in \cite{PetersEtAl2005} for Stokes flow with deterministic data and in 
\cite{MuellerEtAl2017} for Stokes flow with uniform random data.

As the system matrix $\mathcal{C}$ of the SGFE Stokes problem \cref{eq:system_of_equations} is 
symmetric but indefinite (see \cref{eq:block_factorizations}), the MINRES method is the first iterative 
solver one usually considers. 
It is attractive from a complexity point of view as the Krylov basis 
is built with short recurrence. However, MINRES relies on the symmetry of the problem and can thus only be 
combined with a symmetric preconditioner. Applying e.g. the nonsymmetric $\mathcal{P}_{\text{tri}}$ 
in \cref{eq:block_precon} may prevent MINRES from converging. Concerning the preliminary considerations in the 
sections \ref{sec:precon} and \ref{sec:ev_analysis}, we will thus use MINRES solely in combination with 
$\mathcal{P}_{\text{diag}}$ in \cref{eq:block_precon}.
The MINRES algorithm can be formulated efficiently such that the only matrix-vector operations 
necessary per iteration are the application of $\mathcal{C}$ and $\mathcal{P}_{\text{diag}}^{-1}$, see 
\cite[Algorithm~4.1]{ElmanEtAl2014}. Bounds on the extreme eigenvalues of $\mathcal{P}_{\text{diag}}^{-1}\mathcal{C}$ 
are often used to asses MINRES convergence behavior a priori. This is done because of the standard convergence result 
\cite[Theorem 4.14]{ElmanEtAl2014} which bounds MINRES iteration counts by a function of those eigenvalues. Following this reasoning, we will 
use the results from subsection \ref{subsec:ev_blk_diag} to interpret the numerical MINRES results in section \ref{sec:numerical_examples}.

The SGFE problem is no longer symmetric when $\mathcal{P}_{\text{tri}}^{-1}$ in \cref{eq:block_precon} is applied 
to $\mathcal{C}$. Faber and Manteuffel \cite{FaberManteuffel1984} proofed that there does not exist a short recurrence for generating an 
orthogonal Krylov subspace basis for every nonsymmetric matrix. However, they showed that there are special 
cases for which it is possible. The combination of $\mathcal{P}_{\text{tri}}^{-1} \mathcal{C}$ and 
$\mathcal{H}$ is such a case, as condition \cref{eq:inner_product_cond_2} holds. Systems of equations associated 
with the nonsymmetric matrix $\mathcal{P}_{\text{tri}}^{-1} \mathcal{C}$ can thus 
be solved with a CG method. Besides this big advantage, there are also drawbacks: firstly, for the 
method to be defined properly, we must scale $\widetilde{\mathcal{A}}_{\text{mg}}$ 
such that \cref{eq:condition_pd_bpcg} holds. This can be achieved by choosing the scaling as 
$a = \kappa \, \lambda_{\text{min}}(\widetilde{\mathcal{A}}_{\text{mg}}^{-1} \mathcal{A})$, $0 <\kappa <1$, 
as discussed in section \ref{subsec:block_tri}. However, solving the associated eigenproblem numerically leads 
to additional computational costs. Secondly, the naive BPCG algorithm is associated with the 
$\mathcal{H} \mathcal{P}_{\text{tri}}^{-1} \mathcal{C}$-inner product \cite[section 4]{AshbyEtAl1990}. Evaluating 
quantities in this inner product would lead to additional matrix-vector operations. 
Due to certain properties of CG methods \cite{AshbyEtAl1990}, these additional 
costs can be avoided by reformulating the algorithm. Thereby, a BPCG algorithm can be found 
which needs only one extra operation compared to preconditioned MINRES \cite[section 3.1]{PetersEtAl2005}: a 
matrix-vector multiplication with $\mathcal{B}$. This additional operation originates from the definition of 
$\mathcal{P}_{\text{tri}}$ in \cref{eq:block_precon} and is cheap compared to a multiplication with 
$\mathcal{A}$, because $\mathcal{B}$ is block diagonal with sparse blocks. For this reason, the BPCG method is 
particularly interesting in our setting where $\mathcal{A}$ is block dense 
\cite[Lemma 28]{ErnstUllmann2010}. There is also a convergence result for CG which bounds its iteration counts 
by a function of the extreme eigenvalues of $\mathcal{P}_{\text{tri}}^{-1} \mathcal{C}$ 
\cite[Theorem 9.4.12]{Hackbusch1994}, so we follow the same arguments as above: We use the results from 
subsection \ref{subsec:block_tri} to interpret the numerical CG results in \cref{sec:numerical_examples}.

\section{Numerical Experiments}
\label{sec:numerical_examples}
In the following, we compare the solvers discussed in \cref{sec:iterative_solvers} numerically. We use 
the regularized driven cavity test case and investigate the performance of the two methods as well as their 
convergence behavior.

We associate the random field $\mu({x},\stochasticDomainValue)$ with the separable exponential 
covariance function $C_\mu \, : \, \spatialDomain \times \spatialDomain \to \RR$:
\begin{equation}
\label{eq:covariance}
C_\mu(x,y) = \sigma^2_\mu \, e^{-\abs{x_1-y_1}/b_1 - \abs{x_2-y_2}/b_2},
\end{equation}
with correlation lengths $b_1$ and $b_2$ in the $x_1$ and $x_2$ direction, respectively. 
Eigenpairs of the two-dimensional integral operator associated with \cref{eq:covariance} 
are constructed by combining the eigenpairs of two one-dimensional operators, which can be calculated 
analytically \cite[section 5.3]{GhanemSpanos1991}, \cite[section 7.1]{LordEtAl2014}.

We use a regularized version of the lid-driven cavity \cite[section 3.1]{ElmanEtAl2014} as our 
test case. The spatial domain is the unit square $\spatialDomain=[-0.5,0.5]\times[-0.5,0.5]$ and 
we impose a parabolic flow profile ${u}({x}) = (1-16x_1^4,0)^T$ at the top lid. No-slip conditions are 
enforced everywhere else on the boundary. For the numerical simulations, we use the following default 
parameter set:
\begin{equation}
\label{eq:default_parameter_set}
h=0.01, \quad \stochasticDegree=1, \quad \nKL= 10, \quad \nu_0 = 1, \quad \sigma_\nu = 0.2, \quad b_1=b_2=1.
\end{equation}
If not specified otherwise, the simulation parameters are the ones in \cref{eq:default_parameter_set}. The mean 
and variance of the corresponding velocity streamline field can be found in Figure \ref{figure:example_solution}.
\begin{figure}[tbhp]
\centering
\includegraphics[width=0.48\textwidth]{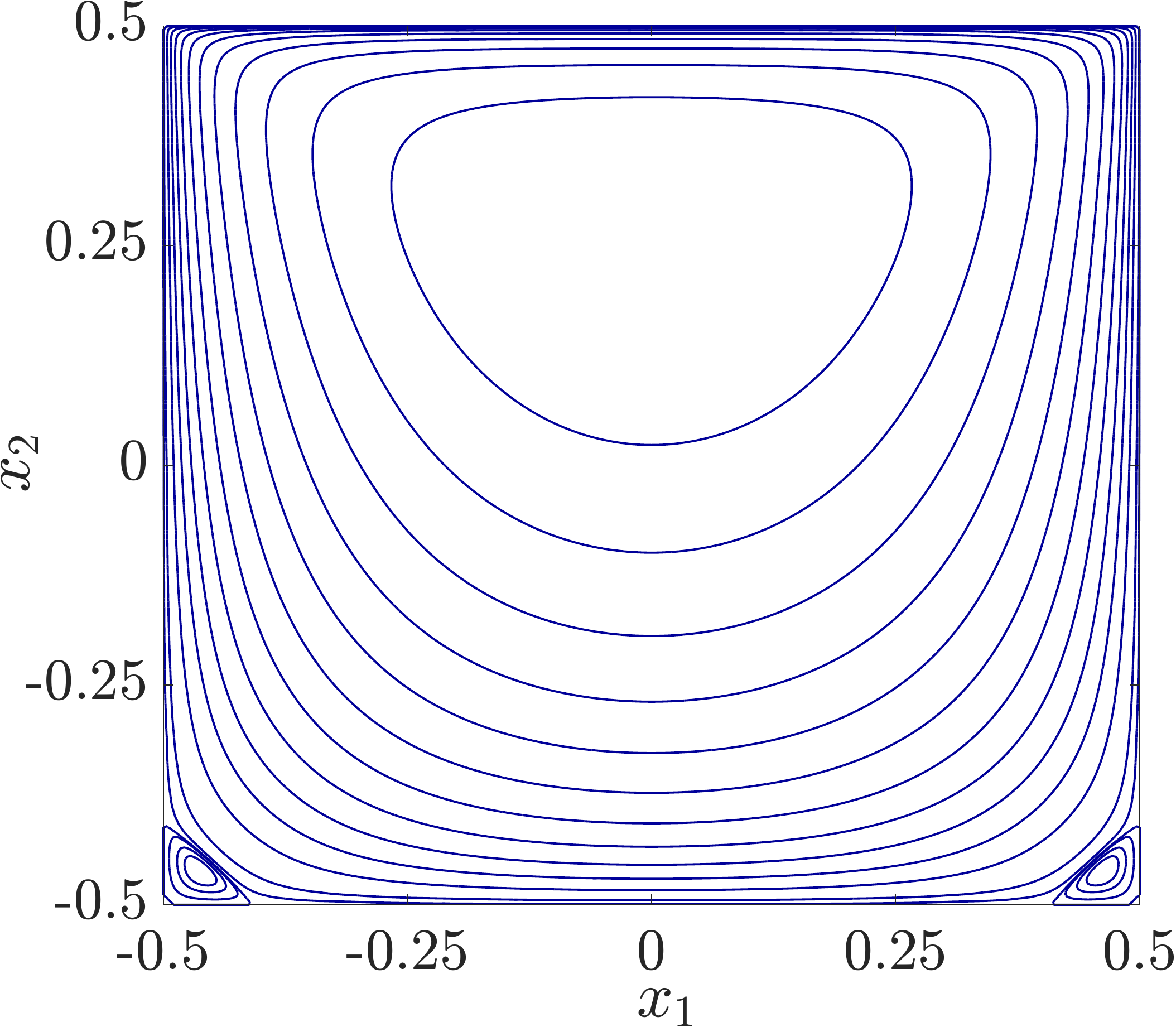} \quad
\includegraphics[width=0.48\textwidth]{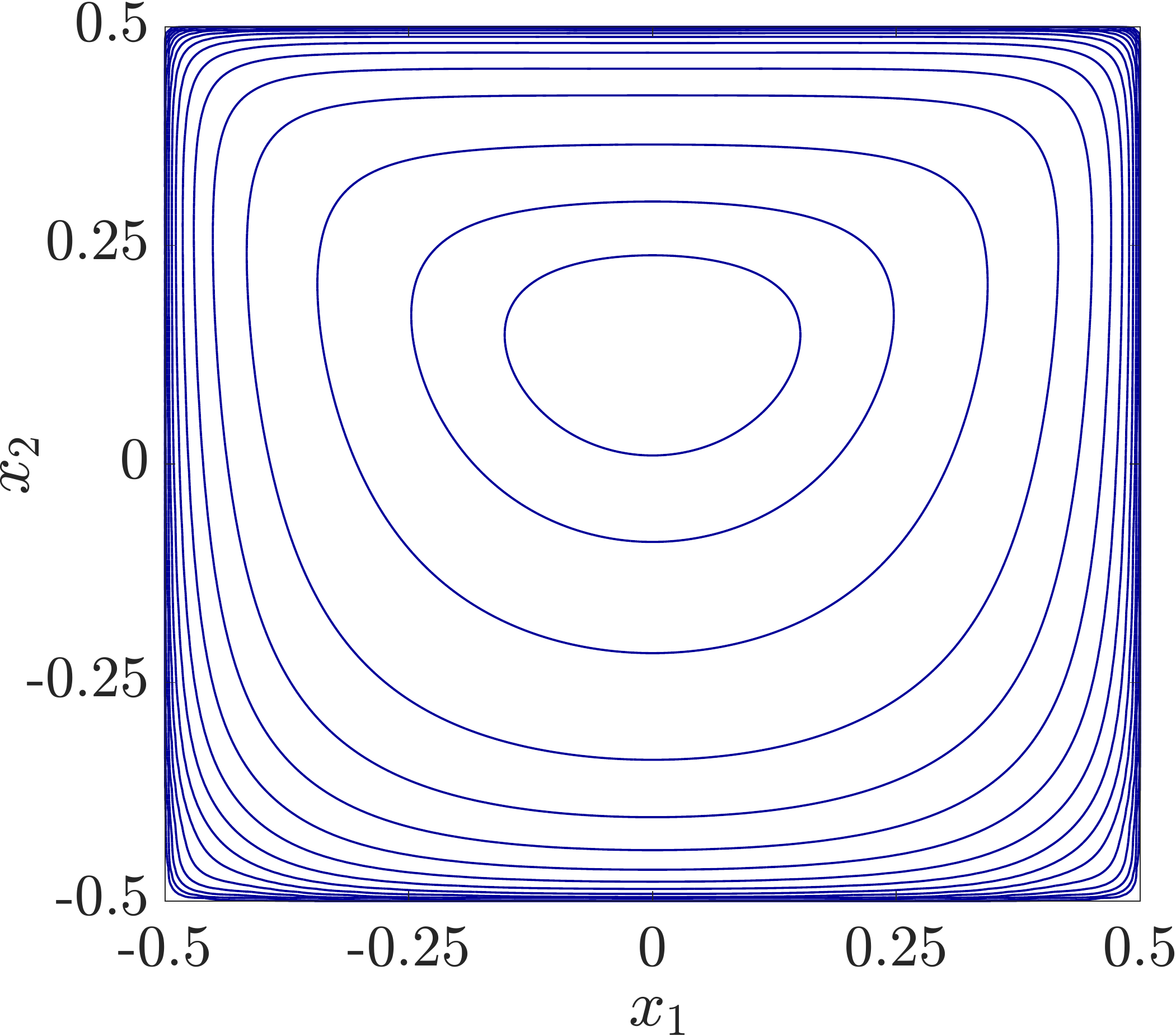}
\caption{Contour lines of the mean (left) and variance (right) of the stream function of the regularized 
driven cavity test case computed with the parameters in \cref{eq:default_parameter_set}.}
\label{figure:example_solution}
\end{figure}

All numerical simulations are carried out in our own finite element implementation in MATLAB 
\cite{Ullmann2016}, except the setup of the multigrid preconditioner. For this particular issue, we resort 
to the algebraic multigrid implementation in the IFISS package \cite{ElmanEtAl2014b} with two point 
Gauss-Seidel pre-and post-smoothing sweeps. In order to compare 
$\mathcal{P}_{\text{diag}}$-pre\-con\-di\-tio\-ned MINRES ($\mathcal{P}_{\text{diag}}$-MINRES) and 
$\mathcal{P}_{\text{tri}}$-preconditioned BPCG ($\mathcal{P}_{\text{tri}}$-BPCG), we look at the iteration 
counts necessary to reduce the Euclidean norm of the relative residual below 
$10^{-6}$. As the initial guess is always the zero vector, we thus consider numbers $n$ such that
\begin{equation}
\norm{\vecg{r}^{(n)}} = \norm{\vecg{b}-\mathcal{C}\vecg{z}^{(n)}} \leq 10^{-6} \norm{\vecg{b}}.
\end{equation}
Before we commence with the actual comparison, we want to assess the influence of the scaling factor $a$ 
on the $\mathcal{P}_{\text{tri}}$-BPCG convergence. Based on the reference value 
$a^* \approx \lambda_{\text{min}}(\widetilde{\mathcal{A}}_{\text{mg}}^{-1}\mathcal{A})$ that we compute 
with MATLAB's (version 8.6.0) numerical eigensolver \texttt{eigs}, we solve the driven cavity problem for 
different values of the relative scaling $a/a^*$. Corresponding iteration counts for 
$\mathcal{P}_{\text{tri}}$-BPCG are displayed in Table \ref{table:varying_scaling}. The minimum iteration 
count of 32 
is attained when the scaling $a$ is chosen to be the reference value $a^*$. Consequently, the ideal 
scaling of the preconditioner is close to the border of the $\mathcal{H}$-positive definiteness condition~\cref{eq:condition_pd_bpcg}. 
However, we notice that moderate variations around the optimal scaling do not lead to a significant increase in 
iteration counts. Further, we can not guarantee convergence for the algorithm when 
$a/a^*>1$ as the $\mathcal{H}$-positive definiteness requirement \cref{eq:condition_pd_bpcg} is no longer strictly 
fulfilled. Still, we could not observe divergent behavior in our experiments.

To lower the costs associated with computing $a^*$, we solve the associated 
eigenproblems on the coarsest mesh with $h=0.1$. This is somewhat heuristic as we could not show $h$-independence 
of the bounds in Lemma \ref{lemma:eigenvalues_precon_diff}. Nevertheless, we assume that the 
mesh size does not influence the scaling significantly due to the chosen spectrally equivalent FE preconditioners, 
see subsection \ref{subsec:precon_fe_matrices}.
\begin{table}[tbhp]
\caption{$\mathcal{P}_{\text{tri}}$-BPCG iteration counts for different values of the relative scaling $a/a^*$.}
\label{table:varying_scaling}
\centering
\begin{tabular}{l c c c c c c c c c c c c c}
\toprule
  $a/a^*$ \hspace{0.5cm}& 0.1 \hspace{0.5cm}& 0.4 \hspace{0.5cm}& 0.6 \hspace{0.5cm}& 0.8 \hspace{0.5cm}& 0.9 \hspace{0.5cm}& 1.0 \hspace{0.5cm}& 1.1 \hspace{0.5cm}& 1.2 \hspace{0.5cm}& 1.4 \hspace{0.5cm}& 1.6 \hspace{0.5cm}& 2.0 \hspace{0.5cm}& 3.0  \hspace{0.5cm}& 5.0 \\
  \midrule
  $n$     & 46  & 41  & 39  & 35  & 34  & 32  & 35  & 35  & 35  & 36  & 38  & 41   & 42 \\ 
\bottomrule
\end{tabular}
\end{table}

We now look at the iteration counts of the two solvers when different parameters 
are varied, starting with the mesh size $h$ and the truncation index $\nKL$ of the KLE. The associated 
numerical results are displayed in Figure \ref{figure:variation_h_M}. 
First of all, we observe that $\mathcal{P}_{\text{tri}}$-BPCG converges in fewer iterations than $\mathcal{P}_{\text{diag}}$-MINRES 
for all considered values of $h$ and $\nKL$. Further, the iteration counts of both solvers do not increase 
under mesh refinement but rather decrease slightly, as can be seen in the left plot. In the right
plot, we notice that the iteration counts increase up to $\nKL \approx 5$ for both methods and then 
basically stay constant independent of~$\nKL$. The results in Figure \ref{figure:variation_h_M} suggest 
that the iteration counts are asymptotically independent of the mesh size and the KLE truncation index. 
This is according to expectations for $h$, as the multigrid V-cycle and the 
diagonal of the pressure mass matrix are spectrally equivalent to the weighted FE Laplacians and 
Schur complement, see \eqref{eq:upper_bound_Diff_approxDiff} -- \eqref{eq:bound_Schur_diagMass}. Asymptotic 
independence of~$\nKL$ is somewhat surprising, as this 
parameter appears in \cref{eq:ev_bounds_SGFE_Delta_hat}, hidden in $\nu_\sigma$. This dependence 
originates from the bounds on the SG matrices in \cref{ev_bounds_G_m}. 

Figure \ref{figure:variation_k_s} visualizes the convergence behavior of the two considered solvers when either the 
total degree $\stochasticDegree$ of the chaos basis or the standard deviation $\sigma_\mu$ of the original Gaussian 
process $\mu({x},\stochasticDomainValue)$ in \cref{eq:inf_KL} is varied. 
We again observe that $\mathcal{P}_{\text{diag}}$-MINRES consistently needs more iterations to converge than 
$\mathcal{P}_{\text{tri}}$-BPCG. We can further see a steady increase of iteration counts with both $\stochasticDegree$ and 
$\sigma_\mu$ for both solvers. That was to be expected as these parameters also occur in the bounds 
\cref{eq:ev_bounds_SGFE_Delta_hat} and \cref{eq:ev_bounds_SGFE_gamma_hat}. When they increase, the fluctuation 
parts -- i.e.~the terms in the sum in \cref{eq:SGFEM_diffusion_matrix} -- become more important, see 
\cref{eq:KL_fluct} and \cref{ev_bounds_G_m}. 

In order to alleviate the influence on $\stochasticDegree$ and $\sigma_\mu$, one needs to use more advanced 
approaches for the SG preconditioners such as the Kronecker product preconditioner, 
see \cite{PowellUllmann2010,EUllmann2010}. However, as there is -- to the best of our knowledge~-- no practical 
preconditioner that can eliminate just one of these dependencies and using a more elaborate preconditioner also 
results in increased computational costs, we do not investigate this issue further here.

\begin{figure}[tbhp]
\centering
\includegraphics[width=0.48\textwidth]{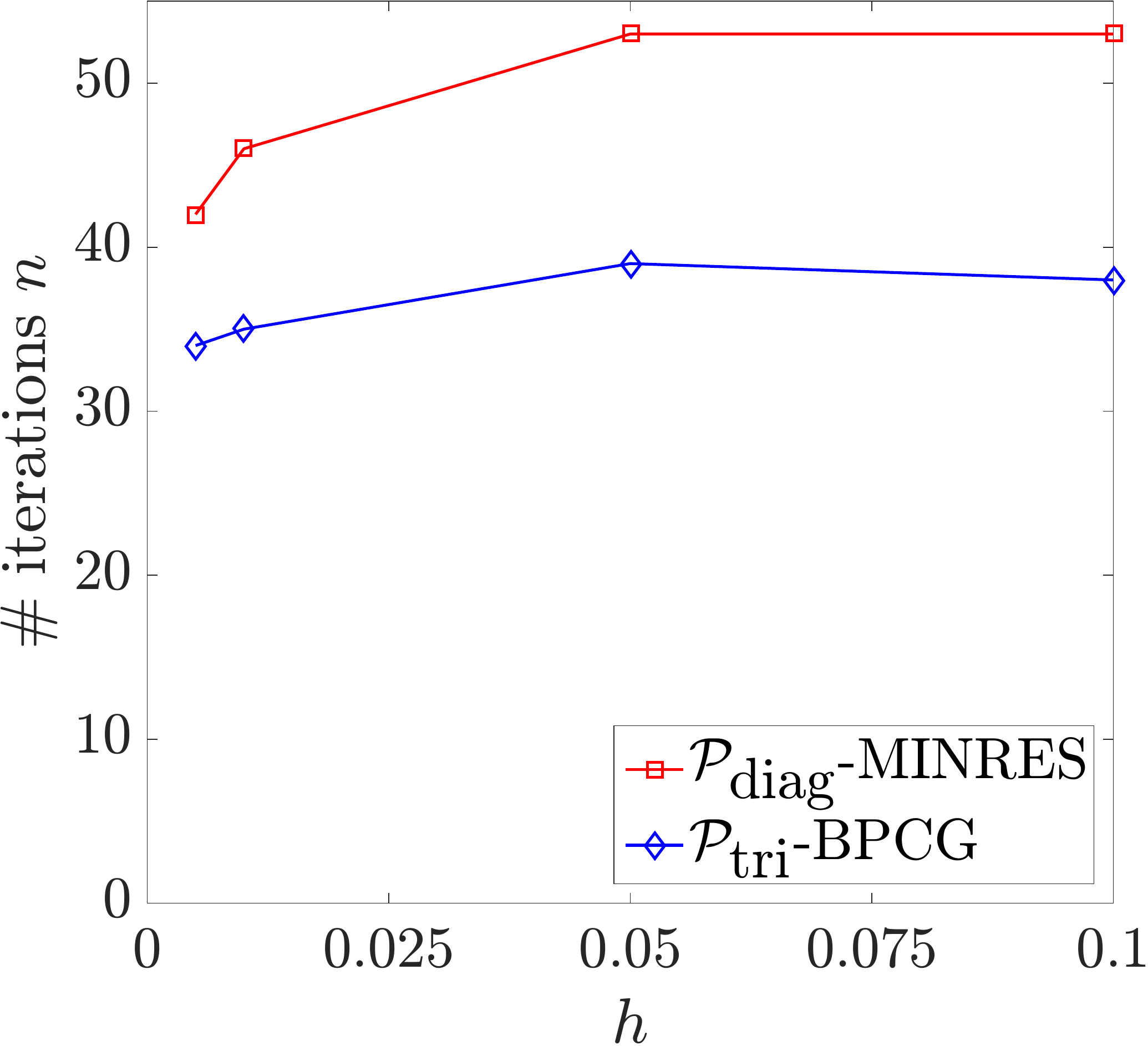} \quad
\includegraphics[width=0.48\textwidth]{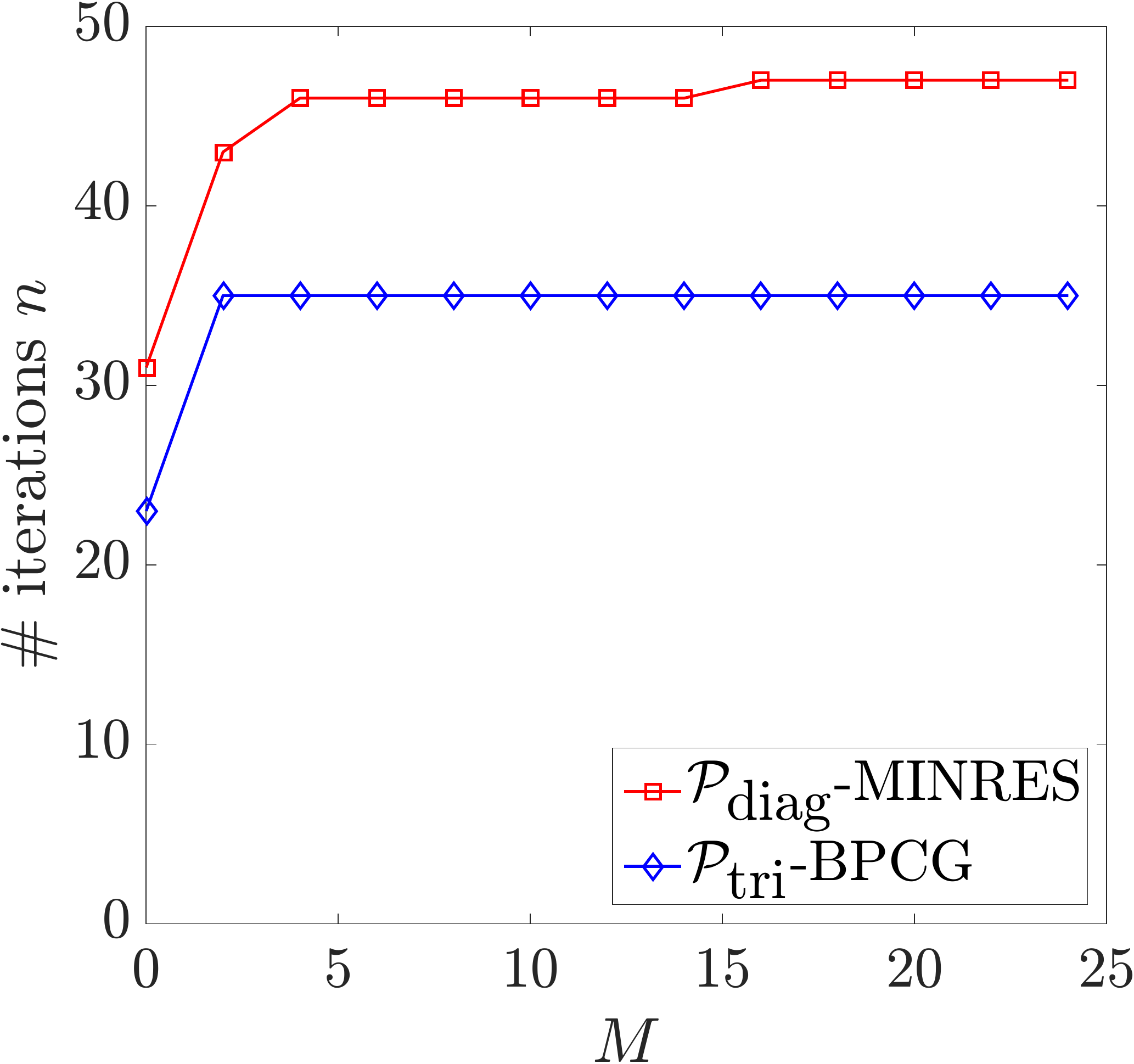}
\caption{Iteration counts for different values of the mesh size $h$ (left) and the 
KLE truncation index $\nKL$ (right) for $\mathcal{P}_{\text{diag}}$-MINRES (red) and $\mathcal{P}_{\text{tri}}$-BPCG 
(blue).}
\label{figure:variation_h_M}
\end{figure}

\begin{figure}[tbhp]
\centering
\includegraphics[width=0.48\textwidth]{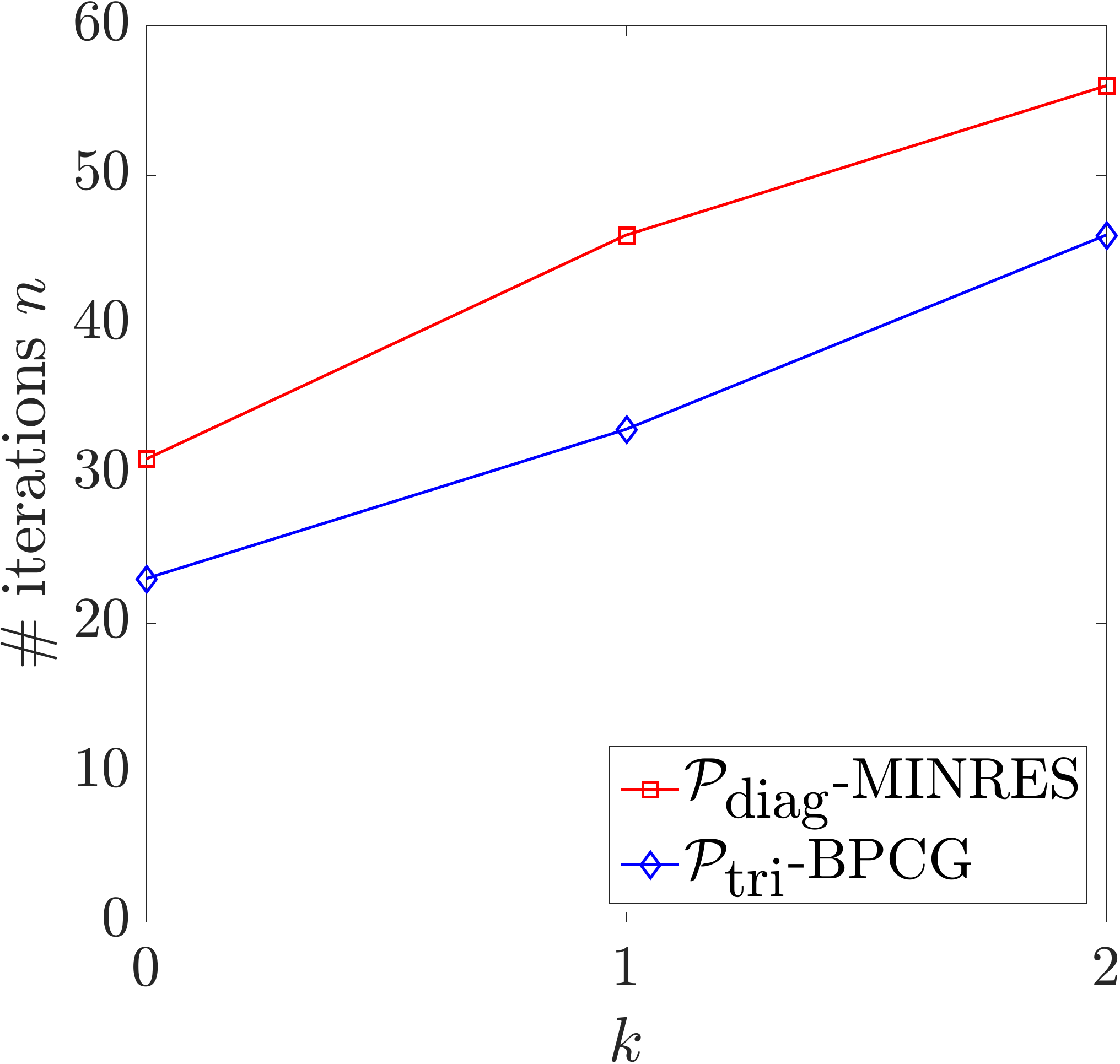} \quad
\includegraphics[width=0.48\textwidth]{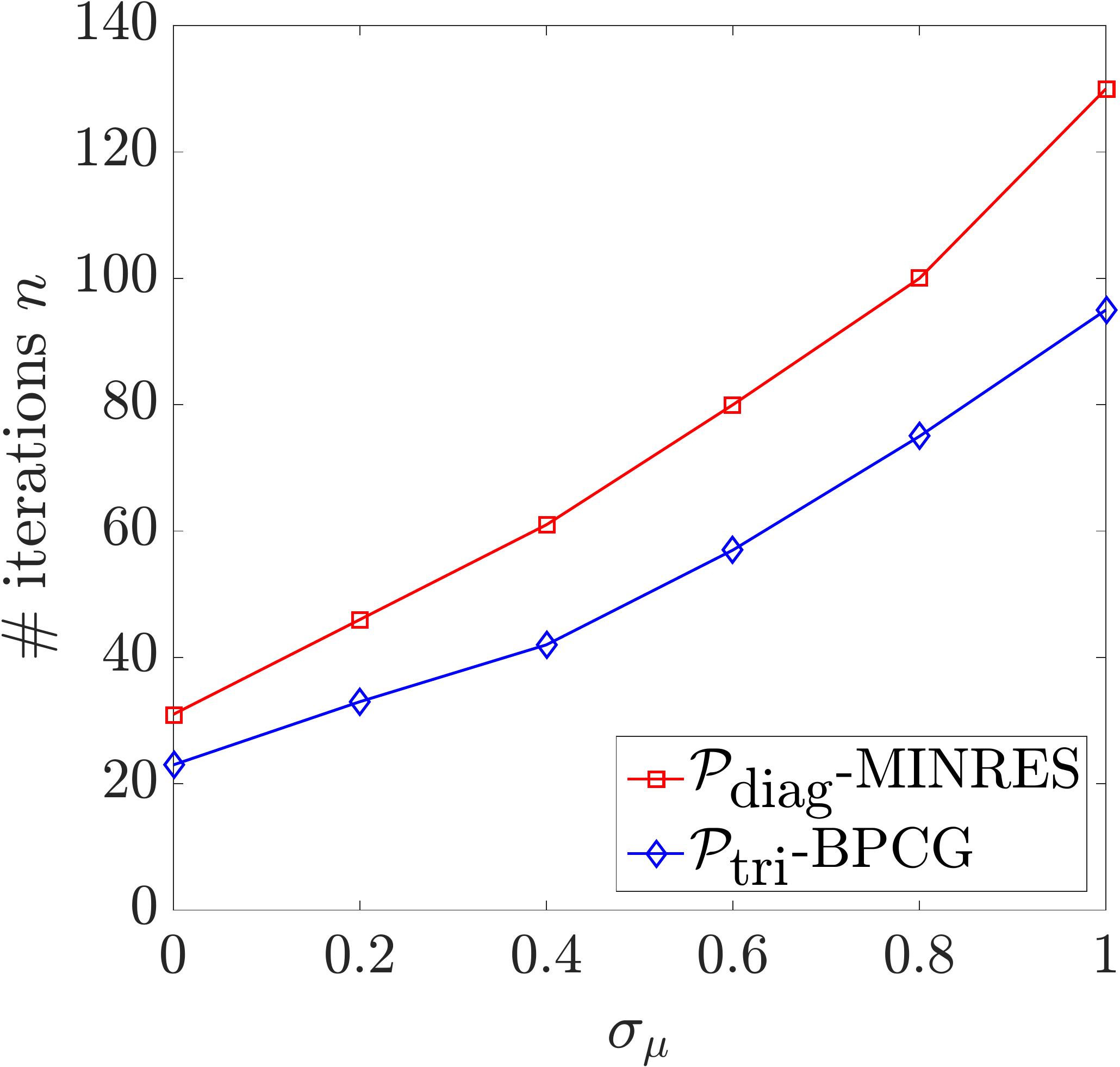}
\caption{Iteration counts for different values of the chaos degree $\stochasticDegree$ (left) and the standard 
deviation $\sigma_\nu$ (right) of the Gaussian field $\mu(x,\stochasticDomainValue)$ for 
$\mathcal{P}_{\text{diag}}$-MINRES (red) and $\mathcal{P}_2$-BPCG (blue).}
\label{figure:variation_k_s}
\end{figure}

\section{Conclusion}
The construction of our BPCG solver relies on the appropriate choice of the scaling $a$ such that the matrix 
$\mathcal{H}$ is positive definite. Choosing $a$ close to the minimum eigenvalue of the preconditioned SGFE 
Laplacian is optimal, as confirmed by the numerical experiments. However, solving the associated eigenproblem 
numerically is often prohibitive. To reduce the costs of computing $a$, one can solve the eigenproblem on a coarser 
mesh. This approach worked well in the numerical examples we considered.

We compared the iteration counts of two iterative solvers with structurally different preconditioners: block 
diagonal preconditioned MINRES and block triangular preconditioned BPCG. The iteration counts of the latter 
were consistently lower in our experiments. However, as we used the same FE and SG building blocks, 
the performance was qualitatively the same: The application of the multigrid preconditioner and the diagonal of the 
pressure mass matrix resulted in iteration counts basically independent of 
the mesh width $h$. The input dimension $\nKL$ is a measure for the accuracy of the input representation. 
However, as soon as a certain threshold is reached, the iteration counts stayed constant independent of $\nKL$. This suggests 
that the eigenvalues of the SG matrices are asymptotically independent of $\nKL$, an assertion which is not according to the 
current theory. Both the degree of the polynomial chaos $\stochasticDegree$ as well as the standard deviation $\sigma_\mu$ 
critically influence the condition of our preconditioned problems. This is already visible in the available eigenvalue 
bounds. The mean-based SG preconditioner can not alleviate these influences. Therefore, when we increased one of these 
parameters, iteration counts increased as well.

Summarizing the investigations of the BPCG method with block triangular preconditioner and the MINRES 
method with block diagonal preconditioner, we can state the following: The eigenvalue analysis is largely inconclusive 
mainly due to the coarse inclusion bounds for the eigenvalues of the SG matrices. However, our numerical tests suggest 
that the methods perform similarly to each other and essential behave as expected. If the scaling parameter can be obtained cheaply, 
the application of the block triangular preconditioner can result in a noticeable reduction of iteration 
counts compared to the application of the block diagonal preconditioner. This can lead to a reduction 
of the overall computational costs, because one step of block diagonal preconditioned MINRES is -- especially in the SGFE case -- 
only marginally cheaper than one step of block triangular preconditioned BPCG. \\
\phantom{bla}\\
\textbf{Acknowledgment. } This work is
supported by the Excellence Initiative of the German federal and state
governments and the Graduate School of Computational Engineering at 
Technische Universit\"at Darmstadt.

%

\ifx\justbeingincluded\undefined
\end{document}
\fi